\documentclass[12pt]{amsart}

\usepackage[a4paper,margin=1in]{geometry}
\usepackage{amsmath,amssymb,amsthm,mathtools}
\usepackage{mathrsfs}
\usepackage{bm,xcolor}

\usepackage[shortlabels]{enumitem}
\usepackage{graphicx}
\definecolor{darkblue}{rgb}{0.0, 0.0, 0.55}
\definecolor{brickred}{rgb}{0.8, 0.25, 0.33}
\definecolor{olivegreen}{rgb}{0.33, 0.42, 0.18}
\usepackage[colorlinks,breaklinks,linkcolor=brickred,citecolor=olivegreen,urlcolor=darkblue,hypertexnames=true]{hyperref}
\usepackage{subcaption}

\usepackage[normalem]{ulem}
\usepackage{cleveref}
\usepackage[baseline]{euflag}

\newtheorem{theorem}{Theorem}
\newtheorem{lemma}[theorem]{Lemma}

\newtheorem{example}[theorem]{Example}

\newcommand{\R}{\mathbb R}
\newcommand{\N}{\mathbb N}
\newcommand{\Z}{\mathbb Z}

\def\M{\mathbf{M}}

\DeclareMathOperator{\supp}{supp}

\DeclareMathOperator{\He}{He}

\title[Convergence Rates for Lasserre's Upper-Bound Hierarchy on $\R^d$]{Convergence Rates for Lasserre's Upper-Bound Hierarchy on $\R^d$ with Gaussian Measure}

\author{Sander Gribling}
\address{Department of Econometrics \& Operations Research, Tilburg University, Netherlands}
\email{s.j.gribling@tilburguniversity.edu}

\author{Igor Klep}
\address{University of Ljubljana, Faculty of Mathematics and Physics, Jadranska 21, 1000 Ljubljana \&
University of Primorska, Faculty of Mathematics, Natural Sciences and Information Technologies, Glagoljaška ulica 8, 6000 Koper, Slovenia.}
\email{igor.klep@fmf.uni-lj.si}
\author{Poornendu Kumar}
\address{University of Ljubljana, Faculty of Mathematics and Physics, Jadranska 21, 1000 Ljubljana}
\email{poornendu.kumar@fmf.uni-lj.si}

\keywords{polynomial optimization, Lasserre hierarchy, sum-of-squares,
Gaussian measure, convergence rates, semidefinite programming}

\subjclass[2020]{90C23, 90C22, 44A60, 42C05}

\begin{document}

\begin{abstract}
We establish quantitative convergence rates for Lasserre's measure-based upper-bound hierarchy on $\R^d$ using the standard Gaussian measure as the reference measure. For any polynomial $f$ attaining its global infimum $f_{\min}$, we prove that the order-$r$ upper bounds satisfy 
\[
\lambda_r-f_{\min}=O\Big(\frac{1}{r}\Big),
\]
yielding, to the best of our knowledge, the first explicit convergence rate for this hierarchy on a noncompact domain.  We further show that the rate is sharp by proving an $\Omega(1/r)$ rate for $f=x^2$, while demonstrating that, without attainment of the infimum, convergence can be arbitrarily slow: for every $m\ge0$, we construct a polynomial whose upper bounds converge at rate $\Theta(r^{-1/(m+2)})$.
\end{abstract}

\maketitle


\section{Introduction}

Let $\R[x]$ be the space of polynomials and 
let $\R[x]_{r}$ be the space of polynomials with degree at most $r$. 
Given a polynomial $f\in\R[x]$ that is bounded from below, we consider the global polynomial optimization problem
\begin{equation}\label{eq:holygrail}
f_{\min}=\inf_{x\in\R^d} f(x).
\end{equation}
Semidefinite programming (SDP) serves as a cornerstone of polynomial optimization, linking sum-of-squares (SOS) certificates
and Positivstellensätze
from real algebraic geometry \cite{Putinar,BCR,Marshall,Scheiderer} to moment problems \cite{LasserreBook,schmbook,NieBook}. 
This perspective underlies the moment--SOS hierarchies of Lasserre \cite{Las01} (see also Parrilo \cite{Par03} and Laurent's survey \cite{LaurentSOS}), which replace the intractable nonconvex problem \eqref{eq:holygrail} with a hierarchy of solvable semidefinite relaxations.
For the moment--SOS hierarchy of lower bounds for $f_{\min}$, quantitative results have been obtained using polynomial-kernel methods \cite{FangFawzi,gribling26Gaussian,Magron26}, effective Positivstellensatz bounds \cite{Schweighofer,BaldiMourrain,BaldiSlot,BaldiMourrainParusinski}, and correlative sparsity \cite{KordaMagronRios}. 
Related phenomena concerning slow convergence have recently been investigated in \cite{HenrionLeFrancMagron}. 
We refer to \cite{LaurentSlotSurvey} for a recent overview.

In contrast to lower bounds certified by polynomial nonnegativity, Lasserre \cite{lasserre2011new} introduced a complementary hierarchy of \emph{upper} bounds based on measure-theoretic approximations. The idea is to minimize the expectation of $f$ over probability measures possessing a sum-of-squares (SOS) density with respect to a prescribed reference measure. 
Specifically, let $\mu$ be a Borel measure
on $\R^d$ with $\supp\mu=\R^d$, whose moment
sequence satisfies the Carleman condition \cite{schmbook}. 
For each $r \ge \lceil \deg(f)/2 \rceil$, 
the $r$-th upper bound is defined by the SDP
\begin{align}
\label{eq:lassUpper}
\begin{array}{rl}
\lambda_r=\displaystyle\inf_{h \in \R[x]_r}& \,\int_{\R^d} f(x) h^2(x) \, d \mu(x)  \\
\mbox{s.t.}& \int_{\R^d} h^2(x)  \, d \mu(x) = 1.
\end{array}
\end{align}
Equivalently, one may optimize over SOS densities
$\sigma\in\Sigma[x]_{2r}$, where
\[
\Sigma[x]_{2r}
=
\Big\{
\sum_j h_j(x)^2 : h_j\in\R[x]_r
\Big\}.
\]
The resulting sequence $(\lambda_r)_r$ is non-increasing and converges  to
$f_{\min}$ from above \cite{lasserre2011new}.
We refer the reader to \cite{KMV26} for a noncommutative extension of Lasserre's upper-bound hierarchy.

Existing quantitative analyses for Lasserre's upper-bound hierarchy focus almost exclusively on \emph{compact} sets \cite{doherty2012convergence,dKLS17,dKHL17,dKL22,SL22}; see \cite{LaurentSlotSurvey} for an overview. For example, the convergence rate on a convex body with respect to the Lebesgue measure is $O((\log r)^2/r^2)$; the log-factors can be removed for simple sets such as the sphere and hypercube (for the right reference measure). Here, we consider the noncompact setting on $\R^d$ endowed with the standard Gaussian measure. 

The strong tail bounds of the Gaussian measure may suggest a similar behaviour to the compact setting, however, we show there are significant differences.

\subsection{Main result}

 Throughout, the ambient dimension is denoted by $d$. Our main result is an explicit convergence rate for Lasserre’s upper-bound hierarchy when $\mu$ is the standard Gaussian (probability) measure on $\R^d$,
\begin{equation*}
d\mu(x)=(2\pi)^{-d/2}e^{-\|x\|^2/2}\,dx.
\end{equation*}
For $r\in\N$, we consider
Lasserre's upper bound hierarchy \eqref{eq:lassUpper} applied to this measure.

\begin{theorem}
\label{thm:improved-log-rate}
Let $f\in\R [x]$ have degree at most $N$, and assume that $f$
attains its global infimum on $\R ^d$. Then there are constants
$C_{f,d,N}>0$ and $r_0\in\N$ such that, for all $r\ge r_0$,
\begin{equation*}
0\le \lambda_r-f_{\min}
\le
\frac{C_{f,d,N}}{r}.
\end{equation*}
\end{theorem}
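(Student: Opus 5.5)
Let $x^*\in\R^d$ be a global minimizer, so $f(x^*)=f_{\min}$. Since $f$ is a polynomial, Taylor expansion around $x^*$ gives
\[
f(x)-f_{\min}\le L\,\|x-x^*\|^2\bigl(1+\|x-x^*\|^{N-2}\bigr)
\]
for some constant $L$ depending on $f,d,N$. Here the gradient term vanishes because $x^*$ is an interior minimizer on $\R^d$. The lower bound $\lambda_r\ge f_{\min}$ is immediate, since $h^2\,d\mu$ is a probability measure. For the upper bound it suffices to exhibit, for each large $r$, one $h_r\in\R[x]_r$ with $\int h_r^2\,d\mu=1$ and
\[
\int \|x-x^*\|^2\bigl(1+\|x-x^*\|^{N-2}\bigr)h_r^2\,d\mu=O(1/r).
\]
In words, the density $h_r^2\,d\mu$ must concentrate near $x^*$ at scale $r^{-1/2}$ in mean square, while its higher moments stay controlled. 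I would take the product ansatz $h_r(x)=\prod_{i=1}^d q_{\lfloor r/d\rfloor}(x_i)$. This reduces everything to one dimension, because $\|x-x^*\|^{k}$ is bounded by a sum of powers of the $|x_i-x^*_i|$ and the Gaussian factorizes.

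In one variable, with target point $a$, the natural candidate is a polynomial approximation of a shifted Gaussian bump. Concretely, I want $q_m^2(t)e^{-t^2/2}$ to behave like $\sqrt{m}\,e^{-m(t-a)^2}$. One explicit choice is the Christoffel–Darboux kernel $q_m(t)=K_m(t,a)/\sqrt{K_m(a,a)}$ for the Hermite weight, where $K_m(t,a)=\sum_{k\le m}\He_k(t)\He_k(a)/k!$. This is automatically normalized, since $\int K_m(t,a)^2\,d\mu(t)=K_m(a,a)$. The quantity to bound is then
\[
\frac{1}{K_m(a,a)}\int (t-a)^2K_m(t,a)^2\,d\mu(t).
\]
The Christoffel–Darboux formula gives
\[
(t-a)K_m(t,a)=\frac{\He_{m+1}(t)\He_m(a)-\He_m(t)\He_{m+1}(a)}{m!},
\]
so by orthogonality this integral equals
\[
\frac{(m+1)\He_m(a)^2+\He_{m+1}(a)^2/(m+1)}{m!\,K_m(a,a)}.
\]
For fixed $a$, Plancherel–Rotach asymptotics give $\He_m(a)^2/m!\sim c_a m^{-1/2}$ on average, and $K_m(a,a)\sim c'_a\sqrt{m}$. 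The ratio is therefore $O(1)$, not $O(1/m)$. So the Christoffel–Darboux kernel only yields $O(1)$, and I need a better choice.

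The better choice is a polynomial approximation of the Gaussian itself. Take
\[
q_m(t)=c\cdot T_m\Bigl(e^{t^2/4}\,e^{-\alpha m(t-a)^2/2}\Bigr),
\]
where $T_m$ denotes truncation of the Hermite expansion to degree $m$, and $\alpha$ is a small constant. The function being expanded is a Gaussian in $t$ with variance $1/(\alpha m-\tfrac12)$, which is in $L^2(\mu)$ once $\alpha m>1$. Its Hermite coefficients can be computed in closed form by the generating function $e^{st-s^2/2}=\sum_k \He_k(t)s^k/k!$. They decay geometrically, like $\rho^k$ with $\rho\approx 1-\Theta(1/(\alpha m))$ up to polynomial factors. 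Choosing $\alpha$ small enough that the geometric decay over $m$ terms beats the $e^{a^2\alpha m}$-type growth from the shift, the truncation error in $L^2(\mu)$ becomes exponentially small relative to the main term. The idealized density then has second moment about $a$ of order $1/(\alpha m)=O(1/r)$, and its moments of order up to $N$ about $a$ are $O(1)$.

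To pass from the idealized density to the actual $h_r$, I would write
\[
\int g\,h_r^2\,d\mu=\int g\,\phi^2\,d\mu+\int g\,(h_r^2-\phi^2)\,d\mu,
\]
where $\phi$ is the untruncated product and $g$ is the polynomial weight from the first paragraph. The error term is controlled by Cauchy–Schwarz, $\|g(h_r-\phi)\|\,\|h_r+\phi\|$, together with hypercontractivity-type bounds on Hermite expansions to handle multiplication by the polynomial $g$ of degree $N$. The main obstacle is exactly this truncation estimate. It must be uniform in the shift $a=x^*_i$ and must hold with polynomial weights, and this is why the statement only holds for $r\ge r_0$ with a constant depending on $f,d,N$. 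If the coefficient asymptotics prove messy, my first fallback would be Mehler's formula: interpret the Gaussian bump as an Ornstein–Uhlenbeck heat kernel $P_s\delta_a$ with $e^{-s}=1-\Theta(1/m)$, and bound its Hermite tail beyond degree $m$ via $\sum_{k>m}e^{-2sk}\He_k(a)^2/k!$.
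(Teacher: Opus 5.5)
Your outer reduction (vanishing gradient at $x^*$, Taylor expansion, product ansatz, reduction to univariate moments about $a=x_i^*$) is the same as the paper's. The gap is in the univariate construction: sharply truncating the Hermite expansion of a Gaussian bump does not give $O(1/r)$.

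\textbf{Why sharp truncation fails.} Take $\alpha$ a fixed constant, as you do. By the generating function, the Hermite coefficients of $\phi(t)=e^{t^2/4-\alpha m(t-a)^2/2}$ are
$c_k=p^{-1/2}e^{-\alpha m a^2/(4p)}\,\tau^k\He_k(\mu_0/\tau)/\sqrt{k!}$, where $p=\alpha m+\tfrac12$, $\tau^2=1-1/p$ and $\mu_0=\alpha m a/p$. So $|c_k|^2$ decays like $\tau^{2k}\approx e^{-k/(\alpha m)}$, and at the cutoff $\tau^{2m}\to e^{-1/\alpha}$. Your $\rho^m$ is therefore $e^{-\Theta(1/\alpha)}$, a constant independent of $m$, not something exponentially small. (There is also no $e^{a^2\alpha m}$ growth; the shift only enters through the bounded argument $\mu_0/\tau$.)

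This matters because the boundary effect you found for the Christoffel--Darboux kernel is still present, only damped by that constant. Write $\mathcal J_a$ for multiplication by $t-a$ on Hermite coefficients. The truncated sequence $c^{(m)}=(c_0,\dots,c_m,0,0,\dots)$ satisfies $(\mathcal J_a c^{(m)})_{m+1}=\sqrt{m+1}\,c_m$, so $\int(t-a)^2(T_m\phi)^2\,d\mu\ge(m+1)|c_m|^2$.

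Test this on $f=x^2$, i.e.\ $a=0$ with $m$ even. There $\He_m(0)^2/m!\sim\sqrt{2/(\pi m)}$ and $\|\phi\|_{L^2(\mu)}^2=(2\alpha m)^{-1/2}$. Hence the Rayleigh quotient of $T_m\phi$ is at least $\frac{2}{\sqrt{\pi\alpha}}e^{-1/\alpha}(1+o(1))$, which does not tend to $0$.

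Your Cauchy--Schwarz and hypercontractivity control of the error is lossier still. On modes $k\approx m$, multiplication by a degree-$N$ weight costs a factor of order $m^{N/2}$. Balancing $1/(\alpha m)$ against $e^{-1/\alpha}$ forces $\alpha\asymp 1/\log m$, and then the best you get is $O(\log r/r)$. That is exactly the rate of the Gaussian-localization argument in the paper's appendix. The Mehler fallback has the same defect, since $e^{-2sm}=e^{-\Theta(1)}$ when $e^{-s}=1-\Theta(1/m)$.

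\textbf{The missing idea.} You need to cut off \emph{smoothly} in the Hermite index, and to bound moments through cancellation in the three-term recurrence rather than through the size of a discarded tail.

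The paper takes $c_k=\eta(k/r)e^{i\Phi_k}$ with $\eta\in C_c^{L+2}((1/4,3/4))$ and $\Phi_{k+1}-\Phi_k=\arccos\bigl(a/(\sqrt k+\sqrt{k+1})\bigr)$. The phases are chosen so that, after conjugation, $\mathcal J_a$ becomes $b_jv_j-\gamma_j\Delta v_{j-1}+\beta_j\Delta v_j$, with $|b_j|\lesssim r^{-1/2}$ and $|\gamma_j|+|\beta_j|\lesssim r^{1/2}$. Differences of the smooth amplitude cost $r^{-1}$, so each factor $x-a$ gains $r^{-1/2}$. This gives $\|(x-a)^\ell Q\|^2\lesssim r^{-\ell}\|Q\|^2$ for $\ell\le L$, uniformly in $|a|\le Br^{1/4}$. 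Taking the real or imaginary part of $Q$ then yields a real test polynomial.

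Your Gaussian could likely be rescued in the same spirit by replacing $T_m$ with a smooth window $\chi(k/m)$. The commutator $[\mathcal J_a,\chi]$ has entries $\sqrt j\,(\chi_{j\pm1}-\chi_j)=O(m^{-1/2})$, and iterated commutators gain $m^{-1/2}$ each. But that commutator analysis is the real content of the proof, and it is not in your proposal.

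A minor point: for your weight $\|x-x^*\|^2(1+\|x-x^*\|^{N-2})$, you need the $N$-th moment about $a$ to be $O(1/r)$, not merely $O(1)$.
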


We complement this upper bound with examples in Section~\ref{sec:examples} that illustrate both the rate and the role of the
attainment assumption. For the univariate quadratic $f=x^2$, we show that the rate is sharp: the hierarchy converges at the rate $\Theta(1/r)$. A second class of examples, for which the
infimum is not attained, shows the convergence can be arbitrarily slow among
polynomial rates: for every $m\ge0$ we exhibit a polynomial $f$ of degree $2m+4$ for which
\[
\lambda_r - f_{\min}=\Theta\left(r^{-1/(m+2)}\right).
\]
\Cref{fig:errors} further illustrates the performance on two examples: a non-centered quadratic $f(x) = (x-1)^2$, and the Motzkin polynomial $f(x,y) = x^4 y^2 + x^2 y^4 -3 x^2 y^2 + 1$.

\begin{figure}[htbp]
  \centering
  \begin{subfigure}[t]{0.48\textwidth}
    \centering
    \includegraphics[width=\textwidth]{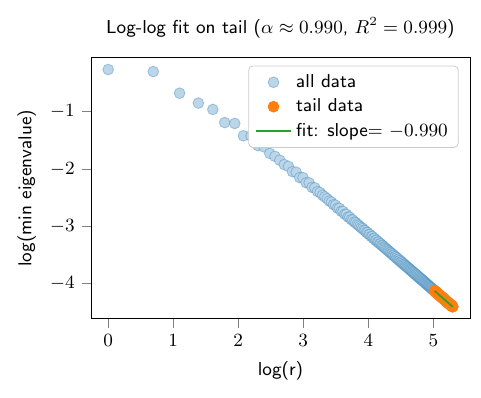}
    \caption{$f(x) = (x-1)^2$}
    \label{fig:errors-a}
  \end{subfigure}
  \hfill
  \begin{subfigure}[t]{0.48\textwidth}
    \centering
   \includegraphics[width=\textwidth]{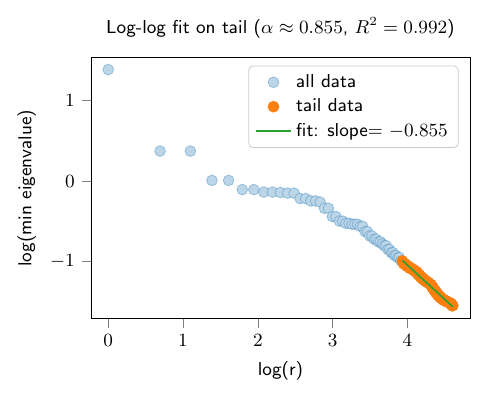}
    \caption{$f(x,y) = x^4 y^2 + x^2 y^4 -3 x^2 y^2 + 1$}
    \label{fig:errors-b}
  \end{subfigure}
  \caption{Log-log plot of the convergence rate on $(x-1)^2$~\eqref{fig:errors-a} ($1 \leq r \leq 200$) and the Motzkin polynomial~\eqref{fig:errors-b} ($1 \leq r \leq 100$). The power-law fit $\propto r^{-\alpha}$ is based on the last $50$ values of $r$.}
  \label{fig:errors}
\end{figure}

\subsection{Guide to the paper}
Section~\ref{sec:preliminaries} states the necessary background on Hermite polynomials. Section~\ref{sec:localization_univariate} contains our key technical lemma, establishing the desired localization in the univariate setting. Section~\ref{sec:proof_main} extends this to the multivariate setting to establish Theorem~\ref{thm:improved-log-rate}. 
Finally, Section~\ref{sec:examples} studies the
hierarchy through two examples: a quadratic example with convergence rate
$\Theta(1/r)$, and a family showing arbitrarily slow polynomial convergence when the infimum is not attained.

\subsection*{Acknowledgments}
This work was performed within the project
COMPUTE, which is funded within the QuantERA II Programme that has received funding from
the EU’s H2020 research and innovation programme under the GA No.~101017733 {\normalsize\euflag}.
IK also acknowledges support of the Slovenian
Research Agency program P1-0222 and grants J1-50002, N1-0217, J1-60011, J1-50001, J1-3004 and
J1-60025.

\section{Preliminaries on Hermite polynomials}\label{sec:preliminaries}

Consider the Gaussian probability measure on $\R$,
\[
    d\mu(x) =  \frac{1}{\sqrt{2\pi}} e^{-\frac{x^2}{2}} \, dx.
\]
With respect to $\mu$ the orthogonal polynomials are the
so--called probabilists' Hermite polynomials $\He_n(x)$ given by
\[
    \He_n(x) = (-1)^n e^{\frac{x^2}{2}} \frac{d^n}{dx^n} \left( e^{-\frac{x^2}{2}} \right).
\]
Then
\[
\int_{-\infty}^{\infty} \He_n(x) \He_m(x)\, d\mu(x) =  n!\, \delta_{nm}
\]
so $h_n(x)=\frac{\He_n(x)}{\sqrt{n!}}$, $n\in\N_0$
is an orthonormal basis of $\R[x]$ w.r.t.~$\mu$.

The Hermite polynomials satisfy the three-term recurrence relation 
\begin{equation}\label{eq:Herecurr}
x \He_m(x) = \He_{m+1}(x) + m \He_{m-1}(x),
\end{equation}
whence
\[
    x^2 \He_m(x) = \He_{m+2}(x) + (2m+1)\He_m(x) + m(m-1)\He_{m-2}(x).
\]
Equivalently, 
\begin{equation*}
\begin{split}
x h_m(x) &= \sqrt{m+1} h_{m+1}(x) + \sqrt{m} h_{m-1}(x), \\
x^2 h_m(x) &= \sqrt{(m+1)(m+2)} h_{m+2}(x) + (2m+1)h_m(x) + \sqrt{m(m-1)}h_{m-2}(x).
\end{split}
\end{equation*}
For these standard facts, see \cite[Section~5.5]{SzegoOrthPoly},
with the change of normalization
$\He_n(x)=2^{-n/2}H_n(x/\sqrt{2})$.

\section{Localization in the univariate setting} \label{sec:localization_univariate}

Here we prove our main technical lemma, which provides the desired localization in the univariate setting. 

\begin{lemma}
\label{lem:higher-order-localization}
Let $\nu$ be the standard Gaussian probability measure on $\R $.
Fix $B>0$ and $L\in\N$. Then there exists a constant
$C=C(B,L)>0$ such that, for every sufficiently large $r$ and every
$a\in\R $ satisfying $
        |a|\leq Br^{1/4},$
there exists  $q\in\R [x]_r$ such that
\[
        \int_{\R }q(x)^2\,d\nu(x)=1 \quad \text {
and} \quad 
        \int_{\R }|x-a|^{2\ell}q(x)^2\,d\nu(x)
        \leq
        \frac{C}{r^\ell},
        \qquad
        1\leq\ell\leq L.
\]
\end{lemma}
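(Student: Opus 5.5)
The plan is to build $q$ in the Hermite basis as a smoothly windowed Christoffel--Darboux kernel centred at $a$. Concretely, fix a smooth window $w:[0,\infty)\to[0,1]$ with $w(t)=1$ for $t\le 1/2$, $w(t)=0$ for $t\ge 1$, and $w\in C^{2L}$. For $n\le r$ set
\[
c_n = w(n/r)\,h_n(a), \qquad q(x) = \frac{1}{Z}\sum_{n=0}^{r} c_n h_n(x), \qquad Z^2=\sum_{n\le r} c_n^2 .
\]
Then $q\in\R[x]_r$ and $\int q^2\,d\nu=1$ by orthonormality of $(h_n)$.

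The point of the construction is that multiplication by $x-a$ acts on coefficient vectors through the Jacobi matrix $J$ coming from the recurrence $x h_m=\sqrt{m+1}h_{m+1}+\sqrt m\,h_{m-1}$ in Section~\ref{sec:preliminaries}. Hence
\[
\int (x-a)^{2\ell}q^2\,d\nu = Z^{-2}\,\|(J-aI)^\ell c\|^2 .
\]
Apply the same recurrence at the point $a$ to $a\,h_n(a)$. This cancels the leading part and gives
\[
\big((J-aI)c\big)_n=\sqrt{n}\,(w_{n-1}-w_n)\,h_{n-1}(a)+\sqrt{n+1}\,(w_{n+1}-w_n)\,h_{n+1}(a), \qquad w_n=w(n/r).
\]
So one application of $J-aI$ replaces the window by a discrete derivative, of size $O(1/r)$, multiplied by $\sqrt n\le\sqrt r$. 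The net gain per application is $r^{-1/2}$.

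Iterating $\ell\le L$ times (by induction on $\ell$), I would show that $(J-aI)^\ell c$ is a finite combination, with at most $3^\ell$ terms, of vectors of the form
\[
n\mapsto \Big(\prod \sqrt{\text{indices}}\Big)\,(\Delta^{j}\text{-type differences of } w)\,h_{n+k}(a), \qquad |k|\le \ell .
\]
Each term has coefficient bounded by $C\,r^{\ell/2}\,r^{-\ell}=C\,r^{-\ell/2}$, using smoothness of $w$ and the fact that the differences are supported in $n\le r+\ell$. This yields
\[
\|(J-aI)^\ell c\|^2\le C\,r^{-\ell}\sum_{n\le r+L} h_n(a)^2 .
\]
It therefore remains to prove the comparability
\[
\sum_{n\le r+L}h_n(a)^2\le C\,Z^2,
\]
and since $Z^2\ge\sum_{n\le r/2}h_n(a)^2$ it suffices to compare the sums of $h_n(a)^2$ over $n\le r/2$ and over $n\le r+L$.

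I expect this comparability to be the main obstacle, and it is where the hypothesis $|a|\le Br^{1/4}$ enters. I would use the Christoffel--Darboux identity together with classical Plancherel--Rotach / Szeg\H{o}-type asymptotics, citing \cite{SzegoOrthPoly}, in the oscillatory regime. These give, uniformly for $a^2\le B^2\sqrt r\ll n$,
\[
e^{-a^2/2}\,h_n(a)^2\asymp n^{-1/2}\quad\text{on average over } n .
\]
Since $e^{-a^2/2}$ is a common factor, both partial sums are then $\asymp e^{a^2/2}\sqrt r$ with constants depending only on $B$. If the pointwise asymptotics prove awkward, my fallback is the known two-sided Christoffel function bound
\[
K_m(a,a)=\sum_{n\le m}h_n(a)^2\asymp \sqrt m\,e^{a^2/2}\qquad\text{for } |a|\le c\sqrt m ,
\]
applied at $m=r/2$ and $m=r+L$. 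That suffices, and in fact holds in a larger range than $|a|\le Br^{1/4}$.

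Combining the two estimates gives $\int(x-a)^{2\ell}q^2\,d\nu\le C(B,L)\,r^{-\ell}$ for $1\le\ell\le L$, once $r$ is large enough that the window and asymptotic estimates apply.
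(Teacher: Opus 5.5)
Your argument is correct, but it follows a genuinely different route from the paper.

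\textbf{The paper's construction.} It uses flat-amplitude complex coefficients $c_k=\eta(k/r)e^{i\Phi_k}$ with hand-built phase increments $\phi_k=\arccos\bigl(a/(\sqrt k+\sqrt{k+1})\bigr)$. In effect this is a discrete WKB approximation of the generalized eigenvector of the Jacobi matrix at $a$. After conjugating by the phases, the diagonal of $J-aI$ becomes $O(r^{-1/2})$. The hypothesis $|a|\le Br^{1/4}$ enters through the derivative bounds on $\phi$. A final step passes to the real or imaginary part.

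\textbf{Your construction.} You use the exact generalized eigenvector $(h_n(a))_n$, i.e.\ a windowed Christoffel--Darboux kernel. The first cancellation is then exact. In $(J-aI)^\ell c=\sum_{|k|\le\ell}u^{(\ell,k)}_n h_{n+k}(a)$ the sequences $u^{(\ell,k)}$ do not depend on $a$, and $q$ is real from the outset.

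\textbf{What each buys.} The paper's route is self-contained: $\|Q\|^2=\sum_k\eta_k^2\asymp r$ follows from orthonormality alone. Your route has a cleaner mechanism and a larger admissible range, $|a|\le c\sqrt r$. However, your normalization $Z^2$ is no longer explicit. The proof therefore rests on the external bulk estimate $K_m(a,a)\asymp\sqrt m\,e^{a^2/2}$ for Hermite Christoffel functions (Freud, Nevai, Levin--Lubinsky). You should cite this precisely; your ``on average'' Plancherel--Rotach sketch is then unnecessary.

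\textbf{Two bookkeeping fixes.}
\begin{enumerate}
\item Your normal form ``$\prod\sqrt{\text{indices}}$ times $\ell$-th differences of $w$'' is not literally what appears. Applying $J-aI$ to $u_nh_{n+k}(a)$ with $k\ne0$ gives, in front of $h_{n+k-1}(a)$, the coefficient $\sqrt n\,(u_{n-1}-u_n)+(\sqrt n-\sqrt{n+k})\,u_n$, and an analogous coefficient in front of $h_{n+k+1}(a)$. The mismatch term is $O(r^{-1/2})u_n$ only because, for $\ell\ge1$, the $u^{(\ell,k)}$ are supported where $n\asymp r$; your choice $w\equiv1$ on $[0,1/2]$ guarantees this. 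So phrase the induction as in the paper's Step~3, via bounds $|\Delta^s u^{(\ell,k)}_n|\le Cr^{-\ell/2-s}$.
\item Since $n\le r+\ell$ and $|k|\le\ell$, your final sum of $h_n(a)^2$ must run to $r+2L$, not $r+L$.
\end{enumerate}
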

\begin{proof}
We construct a nonzero polynomial $Q$ of degree at most $r$ such that
\[
    \|(x-a)^\ell Q\|_{L^2(\nu)}
    \leq Cr^{-\ell/2}\|Q\|_{L^2(\nu)},
    \qquad 1\leq\ell\leq L.
\]
Using the Hermite recurrence, we achieve this by choosing the
coefficients of $Q$ with a slowly varying amplitude and phases
that produce cancellation. Throughout the proof, $C>0$ may change from line to line and depends
only on $B$, $L$, and a fixed cutoff function chosen below.

\medskip
\noindent {\em Step 1:} Construction of the coefficient sequence.\\
Let $\{h_k\}_{k\geq0}$ be the normalized Hermite polynomials, see Section~\ref{sec:preliminaries}. Choose a nonzero real-valued function
$\eta\in C_c^{L+2}\bigl((1/4,3/4)\bigr)$,
and set $\eta_k=\eta(k/r)$. Choose constants
\[
    0<c_0<d_0<d_1<C_0<1
\]
such that $\operatorname{supp}\eta\subset[d_0,d_1]$.
For sufficiently large $r$, all indices within distance $L+2$
of the support of $(\eta_k)$ lie in $[c_0r,C_0r]$.
Thus these indices satisfy $k\asymp r$, with comparison constants
independent of $r$ and $a$. For indices in this interval, define
\[
    \phi_k
    =
    \arccos\left(\frac{a}{\sqrt{k}+\sqrt{k+1}}\right).
\]
This is well defined for sufficiently large $r$, since
$|a|\leq Br^{1/4}$ implies
\[
    \left|\frac{a}{\sqrt{k}+\sqrt{k+1}}\right|
    \leq Cr^{-1/4}\leq \frac12.
\]
Assign arbitrary real values to $\phi_k$ at the remaining
nonnegative indices, and define
\[
    \Phi_0=0,\qquad
    \Phi_{k+1}-\Phi_k=\phi_k.
\]
Set
\[
    c_k=\eta_k e^{i\Phi_k},\qquad 0\leq k\leq r,
    \qquad
    Q(x)=\sum_{k=0}^r c_kh_k(x),
\]
and extend the sequence by setting $c_k=0$ for $k>r$. By orthonormality,
\[
    \|Q\|_{L^2(\nu)}^2=\sum_{k=0}^r\eta_k^2.
\]
Moreover, the Riemann-sum convergence
\[
    \frac1r\sum_{k=0}^r\eta(k/r)^2
    \longrightarrow
    I_\eta:=\int_0^1\eta(t)^2\,dt>0
\]
shows that, for sufficiently large $r$,
\begin{equation}\label{eq:Q-norm}
    c_\eta r
    \leq \|Q\|_{L^2(\nu)}^2
    \leq C_\eta r,
\end{equation}
where $c_\eta=I_\eta/2$ and $C_\eta=2I_\eta$ are positive
constants depending only on $\eta$, independent of $r$ and $a$. 
We next show how the chosen phases produce cancellation
under multiplication by $x-a$.

\medskip
\noindent{\em Step 2:} Cancellation under multiplication by $x-a$.\\
For a finitely supported sequence $u$, define
\[
    (\mathcal J_a u)_j
    =
    \sqrt{j}\,u_{j-1}
    +\sqrt{j+1}\,u_{j+1}
    -au_j,
    \qquad u_{-1}=0.
\]
The Hermite recurrence gives
\[
    (x-a)\sum_{j\geq0}u_jh_j(x)
    =
    \sum_{j\geq0}(\mathcal J_a u)_jh_j(x).
\]
Consequently,
\begin{equation}\label{eq:moment-coefficients}
    \|(x-a)^\ell Q\|_{L^2(\nu)}^2
    =
    \|\mathcal J_a^\ell c\|_{\ell^2}^2.
\end{equation}
We now remove the oscillatory factor from the coefficients.
Writing $u_j=e^{i\Phi_j}v_j$ and
$\Delta v_j=v_{j+1}-v_j$, we obtain
\begin{equation}\label{eq:conjugated-J}
    e^{-i\Phi_j}(\mathcal J_a u)_j
    =
    b_jv_j-\gamma_j\Delta v_{j-1}
    +\beta_j\Delta v_j,
\end{equation}
where
\[
    \gamma_0=0,\qquad
    \gamma_j=\sqrt{j}\,e^{-i\phi_{j-1}}\quad (j\geq1),
\]
and
\[
    \beta_j=\sqrt{j+1}\,e^{i\phi_j},
    \qquad
    b_j=\gamma_j+\beta_j-a,
    \qquad j\geq0.
\]
We use the convention $v_{-1}=0$. The phase was chosen so that the leading terms in $b_j$
cancel. We will use the estimates
\begin{equation}\label{eq:coefficient-differences}
\begin{aligned}
    |\Delta^s\gamma_j|+|\Delta^s\beta_j|
    &\leq Cr^{1/2-s},
        &&0\leq s\leq L+1,\\
    |\Delta^s b_j|
    &\leq Cr^{-1/2-s},
        &&0\leq s\leq L.
\end{aligned}
\end{equation}
These estimates are needed only for indices in the fixed
neighborhood of the support described above. To verify them, consider the smooth functions
\[
    \rho(t)=\frac{a}{\sqrt{t}+\sqrt{t+1}},
    \qquad
    \phi(t)=\arccos \rho(t),
\]
on a slightly larger interval where $t\asymp r$.
Differentiation gives
\[
    |\rho^{(s)}(t)|
    \leq C|a|r^{-1/2-s}
    \leq Cr^{-1/4-s},
    \qquad 0\leq s\leq L+2.
\]
Since $|\rho(t)|\leq1/2$, the chain rule yields
\[
    |\phi^{(s)}(t)|
    +
    \left|\frac{d^s}{dt^s}e^{\pm i\phi(t)}\right|
    \leq Cr^{-1/4-s},
    \qquad 1\leq s\leq L+2.
\]
Thus the smooth extensions 
\[
    \gamma(t)=\sqrt{t}\,e^{-i\phi(t-1)},
    \qquad
    \beta(t)=\sqrt{t+1}\,e^{i\phi(t)}
\]
satisfy
\[
    |\gamma^{(s)}(t)|+|\beta^{(s)}(t)|
    \leq Cr^{1/2-s},
    \qquad 0\leq s\leq L+1.
\]
For the cancellation term, the identity
\[
    (\sqrt{t}+\sqrt{t+1})\cos\phi(t)=a
\]
gives
\[
\begin{aligned}
    b(t)
    &:=
    \gamma(t)+\beta(t)-a=
    \sqrt{t}\bigl(e^{-i\phi(t-1)}-e^{-i\phi(t)}\bigr)
    +i(\sqrt{t+1}-\sqrt{t})\sin\phi(t).
\end{aligned}
\]
The $s$th derivative of the first term is bounded by
$Cr^{-3/4-s}$, using
\[
    e^{-i\phi(t-1)}-e^{-i\phi(t)}
    =
    -\int_0^1
    \frac{d}{dt}e^{-i\phi(t-\tau)}\,d\tau.
\]
The $s$th derivative of the second term is bounded by
$Cr^{-1/2-s}$. Hence
\[
    |b^{(s)}(t)|\leq Cr^{-1/2-s},
    \qquad 0\leq s\leq L.
\]
The finite-difference bounds
\eqref{eq:coefficient-differences} now follow from
\[
    \Delta^sF(j)
    =
    \int_{[0,1]^s}
    F^{(s)}(j+t_1+\cdots+t_s)\,dt_1\cdots dt_s,
    \qquad s\geq1.
\]
We next use these estimates to control the Hermite coefficients
of $(x-a)^\ell Q$ for $1\leq\ell\leq L$.

\medskip

\noindent{\em Step 3}: Iterating the cancellation.\\
Each application of $\mathcal J_a$ enlarges the support by at most
one index on either side, while $\Delta^s$ introduces shifts of
at most $s$ indices. Since $\ell+s\leq L+1$ in the estimates below,
all relevant indices lie in the neighborhood specified in Step~1.
Thus the coefficient bounds from Step~2 apply throughout the induction.

We claim that
\[
    (\mathcal J_a^\ell c)_j
    =
    e^{i\Phi_j}v_j^{(\ell)},
    \qquad 0\leq\ell\leq L,
\]
where
\begin{equation}\label{eq:iterated-difference-bound}
    |\Delta^s v_j^{(\ell)}|
    \leq Cr^{-\ell/2-s},
    \qquad 0\leq s\leq L+1-\ell.
\end{equation}
For $\ell=0$, this follows from
$v_j^{(0)}=\eta(j/r)$ and the smoothness of $\eta$. Suppose the claim holds for some $\ell<L$.
By \eqref{eq:conjugated-J},
\[
    v_j^{(\ell+1)}
    =
    b_jv_j^{(\ell)}
    -\gamma_j\Delta v_{j-1}^{(\ell)}
    +\beta_j\Delta v_j^{(\ell)}.
\]
Apply the discrete Leibniz rule
\[
    \Delta^s(fg)_j
    =
    \sum_{t=0}^s
    \binom{s}{t}
    (\Delta^t f)_{j+s-t}
    (\Delta^{s-t}g)_j.
\]
For $0\leq s\leq L-\ell$, each term arising from
$b_jv_j^{(\ell)}$ is bounded by
\[
    Cr^{-1/2-t}r^{-\ell/2-(s-t)}
    =
    Cr^{-(\ell+1)/2-s}.
\]
Likewise, each term arising from either difference term
is bounded by
\[
    Cr^{1/2-t}r^{-\ell/2-(s-t+1)}
    =
    Cr^{-(\ell+1)/2-s}.
\]
This proves \eqref{eq:iterated-difference-bound} by induction.
In particular, taking $s=0$ gives
\[
    |(\mathcal J_a^\ell c)_j|
    \leq Cr^{-\ell/2}.
\]
By the support observation above, $\mathcal J_a^\ell c$
has at most $Cr$ nonzero entries, and hence
\[
    \|\mathcal J_a^\ell c\|_{\ell^2}^2
    \leq Cr^{1-\ell},
    \qquad 1\leq\ell\leq L.
\]
Combining this with \eqref{eq:Q-norm} and
\eqref{eq:moment-coefficients}, we obtain
\[
    \frac{\displaystyle
        \int_{\mathbb R}|x-a|^{2\ell}|Q(x)|^2\,d\nu(x)}
    {\displaystyle
        \int_{\mathbb R}|Q(x)|^2\,d\nu(x)}
    \leq Cr^{-\ell},
    \qquad 1\leq\ell\leq L.
\]
It remains to obtain a real polynomial satisfying these bounds,
up to a constant factor.

\medskip
\noindent {\em Step 4:} Choosing a real polynomial.\\
Write $Q=Q_1+iQ_2$, with $Q_1,Q_2\in\mathbb R[x]_r$.
Choose $Q_\ast\in\{Q_1,Q_2\}$ such that
\[
    \|Q_\ast\|_{L^2(\nu)}^2
    \geq \frac12\|Q\|_{L^2(\nu)}^2.
\]
Since $Q_\ast(x)^2\leq |Q(x)|^2$, the polynomial $
    q=\frac{Q_\ast}{\|Q_\ast\|_{L^2(\nu)}}$
satisfies
\[
    \int_{\mathbb R}q(x)^2\,d\nu(x)=1,
    \qquad
    \int_{\mathbb R}|x-a|^{2\ell}q(x)^2\,d\nu(x)
    \leq Cr^{-\ell},
    \quad 1\leq\ell\leq L.
\]
This completes the proof.
\end{proof}

\section{Proof of \texorpdfstring{Theorem~\ref{thm:improved-log-rate}}{Theorem 1}} \label{sec:proof_main}

Let $f \in \R[x]$ have degree at most $N$. Assume $f$ attains its global infimum on $\R^d$ at $x^* = (a_1,\ldots,a_d)$. We construct a sequence of degree-$r$ polynomials $q$ such that 
$\int_{\R^d} q(x)^2 d\mu(x)=1$ and 
\[
\int_{\R^d} (f(x)-f_{\min}) q(x)^2 d\mu(x) \leq C_{f,d,N}/r
\]
for some constant $C_{f,d,N}$. 

To do so, we use Lemma~\ref{lem:higher-order-localization} with $B=1$ and $L = \lceil N/2\rceil$. For each $j \in [d]$, it provides a sequence of degree-$\lfloor r/d \rfloor$ univariate polynomials $q_j$ such that, for $r$ large enough, we have 
\[
        \int_{\R }q_j(x_j)^2\,d\nu(x_j)=1 \quad \text {
and} \quad 
        \int_{\R }|x_j-a_j|^{2\ell}q(x_j)^2\,d\nu(x_j)
        \leq
        \frac{C}{r^\ell},
        \qquad
        1\leq\ell\leq L.
\]
Here $C>1$ is a constant and $d\nu$ is the univariate Gaussian measure. In particular, this bounds the even moments of $x_j-a_j$. We first extend this bound to odd moments. Let $1 \leq k = 2\ell-1 \leq 2L$. Then $2\ell/k = 1+1/k>1$ and therefore H\"older's inequality yields 
\begin{align*}
\int_{\R }|x_j-a_j|^{k}q(x_j)^2\,d\nu(x_j)
        &\leq \left(\int_{\R} |x_j-a_j|^{2\ell} q_j(x_j)^2 d\nu(x_j)\right)^{k/2\ell}\left(\int_{\R} q_j(x_j)^2 d\nu(x_j)\right)^{1-k/2\ell} \\
        &\leq \left(C r^{-\ell}\right)^{k/2\ell} \leq C r^{-k/2}.
\end{align*}

We then set 
\[
q(x) = \prod_{j \in [d]} q_j(x_j),
\]
which is a degree-$r$ polynomial since $d \lfloor r/d\rfloor \leq r$. Since the multivariate Gaussian measure equals the product of the univariate measures, we obtain 
\[
        \int_{\R^d }q(x)^2\,d\mu(x)=1 \quad \text {
and} \quad 
        \int_{\R^d } |x-a|^{\alpha} q(x)^2\,d\mu(x)
        \leq
        C r^{-|\alpha|/2},
        \qquad
        1\leq|\alpha|\leq 2L.
\]

To conclude the proof, it remains to observe that since $x^*$ is a global minimizer of $f$, the Taylor expansion of $f-f_{\min}$ around $x^*$ has no constant of linear terms:
\[
f-f_{\min} = \sum_{\alpha \in \N^d:2 \leq |\alpha|\leq N} c_\alpha (x-x^*)^\alpha. 
\]
We therefore have 
\begin{align*}
    \lambda_r - f_{\min} &\leq \sum_{\alpha \in \N^d:2 \leq |\alpha|\leq N} |c_\alpha|  C r^{-|\alpha|/2} \leq \left(C\sum_{\alpha \in \N^d:2 \leq |\alpha|\leq N} |c_\alpha| \right) r^{-1}
\end{align*}
which is of the desired form for $C_{f,d,N} = C\sum_{\alpha \in \N^d:2 \leq |\alpha|\leq N} |c_\alpha|  $.

\section{Examples}\label{sec:examples}

To provide context and examine the sharpness and limitations of Theorem \ref{thm:improved-log-rate}, we first recall the dual SDP formulation of Lasserre’s upper hierarchy \eqref{eq:lassUpper}.
For this we need to introduce moment matrices and localized moment matrices.
We refer to \cite{LaurentSOS,NieBook,HenrionKordaLasserre} for background on moment and localizing matrices and their role in polynomial optimization.

Given a measure $\mu$ on $\R^d$ and $r\in\N$, the truncated commutative  \emph{moment matrix} $\M_r(\mu)$ associated with $\mu$
is the real symmetric matrix with rows and columns indexed by a 
basis (e.g.~the canonical basis $x^\alpha$) 
of $\R[x]_r$ 
and with entries:
\begin{align*}
\M_r(\mu)(\alpha,\beta) =  
\int_{\R^d} x^{\alpha + \beta}\, d\mu, \quad \alpha,\beta\in\N_r^n,
\end{align*}
{where $\N^n_r = \lbrace \alpha\in(\N\cup\{0\})^n\mid \sum_i \alpha_i\leq r\rbrace$. 
This matrix is a  multivariate version of a (univariate) Hankel matrix.} 

Similarly, for all $f \in \R[x]$, the truncated \emph{localizing moment matrix} $\M_r(f \, \mu)$ is the real symmetric matrix with rows and columns indexed by 
a basis (e.g., the canonical one)
of $\R[x]_r$ 
 and with entries 
$$\M_r(f \, \mu)(\alpha,\beta) =  
\int_{\R^d}
f(x) \, x^{\alpha + \beta}\, d\mu, \quad \alpha,\beta\in\N_r^n.$$
Note that 
$\M_r(1 \, \mu)=\M_r(\mu)$. 

With this notation introduced, the dual SDP to \eqref{eq:lassUpper} is
\begin{align}
\label{eq:lassUpper'}
\begin{array}{rl}
\lambda_r = \displaystyle\sup_{\lambda \in \R}& \,\lambda \\
\mbox{s.t.}& \M_{r}(f\, \mu) \, \succeq \, \lambda \, \M_r( \mu) \,,
\end{array}
\end{align}
and is a generalized eigenvalue problem. Further, if one considers the two matrices 
$\M_{r}(f\, \mu) $ and $\M_{r}(\mu)$
indexed by an orthonormal basis of $\R[x]_r$ w.r.t.~$\mu$, then $\M_r(\mu)=I$ and \eqref{eq:lassUpper'}
is in fact equivalent to an eigenvalue problem.

Connections between polynomial optimization, moment matrices, Jacobi matrices, and extremal zeros of orthogonal polynomials have been exploited in \cite{dKL20} in the context of Lasserre's measure-based upper bounds.

\subsection{An example with convergence rate \texorpdfstring{$\frac1r$}{1/r}}

\begin{example}\label{ex:x^2}\rm
Let $d=1$ and consider the Gaussian probability measure on $\R$,
\[
    d\mu(x) =  \frac{1}{\sqrt{2\pi}} e^{-\frac{x^2}{2}} \, dx.
\]
We will consider \eqref{eq:lassUpper'} for $f=x^2$ with respect to the orthonormal basis $\{h_n\}_{n \in \N_0}$ of normalized Hermite polynomials (see Section~\ref{sec:preliminaries}) so that 
 
$\M_r(\mu)=I$.

Using the three-term recurrence relation of the Hermite polynomials~\eqref{eq:Herecurr}, the localizing matrix $\M_r(x^2\,\mu)$ 
with rows and columns indexed by $0,\ldots,r$
is
easily seen to be 
\[
    \M_r(x^2\, \mu)(n, m) = \begin{cases} 
2m + 1 & \text{if } n = m \\ 
\sqrt{(m+1)(m+2)} & \text{if } n = m + 2 \\ 
\sqrt{m(m-1)} & \text{if } n = m - 2 \\
0 & \text{otherwise} 
\end{cases},
\]
for $n,m = 0,1,\ldots,r$;
that is,
\[
    \M_r(x^2\, \mu) = \begin{pmatrix}
1 & 0 & \sqrt{2} & 0 & 0 & \cdots & 0 \\
0 & 3 & 0 & \sqrt{6} & 0 & \cdots & 0 \\
\sqrt{2} & 0 & 5 & 0 & \sqrt{12} & \cdots & 0 \\
0 & \sqrt{6} & 0 & 7 & 0 & \ddots & \vdots \\
0 & 0 & \sqrt{12} & 0 & 9 & \ddots & \sqrt{r(r-1)} \\
\vdots & \vdots & \vdots & \ddots & \ddots & \ddots & 0 \\
0 & 0 & 0 & \cdots & \sqrt{r(r-1)} & 0 & 2r+1
\end{pmatrix}
\]
The solution $\lambda_r$ to \eqref{eq:lassUpper}
(or, equivalently, \eqref{eq:lassUpper'})
is the smallest eigenvalue of $\M_r(x^2\,\mu)$.

In the spectral theory of orthogonal polynomials, the infinite matrix representing the operator of multiplication by $x$ is called the Jacobi matrix $J$ (see, e.g., \cite[Chapter~4]{SimonOP4}). 
For the normalized Hermite polynomials, using \eqref{eq:Herecurr}, $J$ is a tridiagonal matrix,
whose only non-zero entries are 
\[J_{m, m+1} = J_{m+1, m} = \sqrt{m+1} \quad \text { for }  0 \le m . 
\]

Since our matrix represents multiplication by $x^2$, the infinite-dimensional version of $\M_r(x^2\,\mu)$ is exactly the squared Jacobi matrix $J^2$. 
Let $J_r$ denote the principal $(r+1) \times (r+1)$ submatrix of $J$ indexed by $0,\ldots,r$. 
Let $\bm{e}_j\in\R^{r+1}$, $0\leq j\leq r$, denote the corresponding coordinate vectors.
Evaluating $(J_r^2)_{n,m} = \sum_{k=0}^r (J_r)_{n,k} (J_r)_{k,m}$ and comparing this to $\M_r(x^2\mu)$, we obtain
\begin{equation*}
\M_r(x^2\,\mu) = J_r^2 + (r+1) E_{r,r},
\end{equation*}
where $E_{r,r} = \bm{e}_{r} \bm{e}_{r}^T$ is the rank-one projection  onto the boundary coordinate. 

The eigenvalues of $J_r$ correspond precisely to the roots $\gamma$ of the $(r+1)$-th Hermite polynomial (see, e.g., \cite[Theorem 1.32]{GautschiOrthPoly}), which are distributed symmetrically around zero. Squaring these roots yields the eigenvalues of $J_r^2$, whence every non-zero eigenvalue possesses a multiplicity of exactly 2. 
To analyze the asymptotics of the minimal eigenvalue 
\[\lambda_r = \lambda_{\min}(\M_r(x^2\,\mu)),\] it suffices to consider odd $r$ as the sequence $\lambda_r$ is monotone by definition.
Because $\M_r(x^2\, \mu)$ is a rank-one positive-semidefinite perturbation of $J_r^2$, its eigenvalues interlace with those of $J_r^2$ via Cauchy's interlacing theorem \cite[Theorem 4.3.1]{HJ12}.\looseness=-1

Since $r$ is odd, the polynomial $\He_{r+1}(x)$ contains no root at the origin. Thus, the minimum eigenvalue of $J_r^2$ is $\gamma_{\min}^2 > 0$ with multiplicity 2. Because the positive perturbation $(r+1)E_{r,r}$ has rank one, 
it cannot increase the minimal eigenvalue of $J_r^2$. Hence
\[
\lambda_r = \gamma_{\min}^2.
 \]
By classical asymptotic limits for the zeros of Hermite polynomials, the smallest positive zero scales as $\gamma_{\min} = \Theta(1/\sqrt{r})$ as $r \to \infty$ (e.g., \cite[Theorem 6.32]{SzegoOrthPoly}). Squaring this relation delivers the desired asymptotic rate:
\[
\lambda_r = \Theta\left(\frac{1}{r}\right).
\]
This shows that even for a simple univariate quadratic objective function, the Lasserre upper bound hierarchy 
\eqref{eq:lassUpper}
yields a sharp convergence rate of $\frac 1r$.

\end{example}

\subsection{A family with arbitrarily slow polynomial convergence rates}

If one drops the assumption of $f$ having a minimizer (i.e., if $f$ is merely bounded from below without attaining its infimum), then the conclusion of Theorem \ref{thm:improved-log-rate} may fail to hold.

\begin{example}\label{ex:fmxy}\rm
For an integer $m\ge 0$, consider the polynomial
\[
        f_m(x,y) = x^2+(1+y^2)^m(xy-1)^2 .
\]
One can verify that $f_m(x,y) > 0$ for all $x,y \in \R$ and for $\varepsilon>0$ we have $f_m(\varepsilon,1/\varepsilon) = \varepsilon^2$. Hence, the infimum of $f_m$ is equal to $0$, but it is not attained. We show 
\begin{equation}\label{eq:fmxy}
        \lambda_r^{(m)}=\Theta\left(r^{-1/(m+2)}\right),
        \qquad r\to\infty .
\end{equation}

\end{example}
To establish the rate \eqref{eq:fmxy} we require two lemmas.

\begin{lemma}\label{lem:x^2}
For $r\ge0$ and $a\in\R$, define
\begin{equation*}
        \theta_r(a)
        =
        \inf_{\substack{q\in\R[x]_{ r}\\ q\ne0}}
        \frac{\int_\R (x-a)^2q(x)^2\,d\nu(x)}
             {\int_\R q(x)^2\,d\nu(x)}.
\end{equation*}
Then, 
    for all $a\in\R$, we have
\begin{equation*}
\theta_r(a)\ge \displaystyle\frac{1}{4r+2}.
\end{equation*}
\end{lemma}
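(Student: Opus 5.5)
The idea is to use an uncertainty-principle (commutator) argument in $L^2(\nu)$. We exploit the canonical commutation relation between differentiation and multiplication by $x-a$, together with the fact that differentiation and its adjoint act as weighted shifts on the Hermite basis. This avoids any analysis of the zeros of $\He_{r+1}$. That matters because the naive comparison $\M_r((x-a)^2\nu)\succeq (J_r-aI)^2$ is useless here: its smallest eigenvalue vanishes whenever $a$ is a Hermite zero.

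\emph{Step 1: shift operators.} Let $D=\frac{d}{dx}$ on $\R[x]$. We use the standard identity $D\He_k=k\He_{k-1}$, which gives $Dh_k=\sqrt{k}\,h_{k-1}$. By Gaussian integration by parts, $\int (Dp)\,q\,d\nu=\int p\,(xq-Dq)\,d\nu$, so the adjoint of $D$ on polynomials is $D^*=x-D$. From the recurrence in Section~\ref{sec:preliminaries}, $D^*h_k=xh_k-\sqrt{k}h_{k-1}=\sqrt{k+1}\,h_{k+1}$. Hence, for $q=\sum_{k=0}^r c_kh_k$,
\[
\|Dq\|^2=\sum_{k\le r} k\,c_k^2\le r\|q\|^2,
\qquad
\|D^*q\|^2=\sum_{k\le r}(k+1)c_k^2\le (r+1)\|q\|^2 .
\]

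\emph{Step 2: commutator identity.} For every $a\in\R$ we have $D\bigl((x-a)q\bigr)-(x-a)Dq=q$. Pairing with $q$ in $L^2(\nu)$ and moving $D$ across the first term gives
\[
\|q\|^2=\langle (x-a)q,\,D^*q\rangle-\langle (x-a)q,\,Dq\rangle .
\]
All functions involved are polynomials, so all integrals are finite. Cauchy--Schwarz and Step~1 then yield
\[
\|q\|^2\le \|(x-a)q\|\,\bigl(\sqrt{r}+\sqrt{r+1}\bigr)\|q\|.
\]

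\emph{Step 3: conclusion.} Squaring the last inequality gives
\[
\frac{\int(x-a)^2q^2\,d\nu}{\int q^2\,d\nu}\ge\frac{1}{(\sqrt r+\sqrt{r+1})^2}.
\]
Since $(\sqrt r+\sqrt{r+1})^2\le 2(r+r+1)=4r+2$, this is at least $1/(4r+2)$, uniformly in $a$ and in $q\neq 0$. Taking the infimum over $q$ proves the lemma.

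The only step needing care is the adjoint computation $D^*=x-D$ and its action on the $h_k$; everything else is elementary.
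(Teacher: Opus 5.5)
Your proof is correct and is essentially the paper's argument. Under the unitary map $q\mapsto (2\pi)^{-1/4}q\,e^{-x^2/4}$, the operator $D^*-D$ becomes $-2\,d/dx$, so your identity $\|q\|^2=\langle (x-a)q,(D^*-D)q\rangle$ is exactly the paper's integration-by-parts identity $\|\psi\|^2=-2\int(x-a)\psi'\psi\,dx$, and the Cauchy--Schwarz step is the same. The only difference is that you bound $\|(D^*-D)q\|$ via the triangle inequality with the ladder-operator norms, instead of via the harmonic-oscillator energy $\langle H\psi,\psi\rangle\le (r+\tfrac12)\|\psi\|^2$; this gives the marginally sharper constant $(\sqrt r+\sqrt{r+1})^{-2}\ge 1/(4r+2)$.
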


\begin{proof}
Let $q\in\R[x]_{ r}$ and define
\[
        \psi(x) = (2\pi)^{-1/4}q(x)e^{-x^2/4}.
\]
Then
\[
        \|\psi\|_{L^2(dx)}^2
        =
        \int_\R q(x)^2\,d\nu(x),
\]
and
\[
        \|(x-a)\psi\|_{L^2(dx)}^2
        =
        \int_\R (x-a)^2q(x)^2\,d\nu(x).
\]
Let
\[
        \psi_k(x) = (2\pi)^{-1/4}h_k(x)e^{-x^2/4}.
\]
The functions $\psi_k$ are the normalized Hermite functions. They are eigenfunctions of the harmonic oscillator Hamiltonian $H = -\frac{\mathrm{d}^2}{\mathrm{d}x^2} + \frac{x^2}{4}$, satisfying
\[
        \left(-\frac{d^2}{dx^2}+\frac{x^2}{4}\right)\psi_k
        =
        \left(k+\frac12\right)\psi_k.
\]
Since $q$ has degree at most $r$, $\psi$ belongs to
$\operatorname{span}\{\psi_0,\ldots,\psi_r\}$. Hence
\[
        \|\psi'\|_2^2+\frac14\|x\psi\|_2^2
        =
        \left\langle
        H \psi,\psi
        \right\rangle
        \le
        \left(r+\frac12\right)\|\psi\|_2^2.
\]
In particular,
\begin{equation}\label{eq:tag4}
        \|\psi'\|_2^2
        \le
        \left(r+\frac12\right)\|\psi\|_2^2.
\end{equation}
On the other hand, integration by parts gives
\[
        \|\psi\|_2^2
        =
        -2
        \int_\R (x-a)\psi'(x)\psi(x)\,dx.
\]
Thus, by the Cauchy–Schwarz inequality,
\[
        \|\psi\|_2^2
        \le
        2\|(x-a)\psi\|_2\|\psi'\|_2.
\]
Combining this estimate with \eqref{eq:tag4}, we get
\[
        \|(x-a)\psi\|_2^2
        \ge
        \frac{\|\psi\|_2^4}{4\|\psi'\|_2^2}
        \ge
        \frac{1}{4r+2}\|\psi\|_2^2.
\]
Returning to $q$, this says
\[
        \int_\R (x-a)^2q(x)^2\,d\nu(x)
        \ge
        \frac{1}{4r+2}
        \int_\R q(x)^2\,d\nu(x).
\]
Taking the infimum over $q\in\R[x]_r$ proves the claim.
\end{proof}

\begin{lemma}\label{lem:weighted-Gaussian-localization}
Fix $m\in\N_0$ and $B>0$. Then there exists a constant
$C=C(B,m)>0$ such that, for every sufficiently large $r$ and every
$a\in\R $ satisfying
\[
        |a|\leq Br^{1/4},
\]
there exists a polynomial $q\in\R [x]_r$ with $
        \int_{\R }q(x)^2\,d\nu(x)=1,$ such that
\[
\begin{split}
        \int_{\R }(x-a)^2q(x)^2\,d\nu(x)
        &\leq
        \frac{C}{r},\\
        \int_{\R }(1+x^2)^{m+1}q(x)^2\,d\nu(x)
&        \leq
        C(1+a^2)^{m+1},\\
        \int_{\R }
        (1+x^2)^m(x-a)^2q(x)^2\,d\nu(x)
&        \leq
        \frac{C}{r}(1+a^2)^m.
\end{split}
\]
\end{lemma}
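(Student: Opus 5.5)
We take $q$ to be exactly the polynomial produced by Lemma~\ref{lem:higher-order-localization}, applied with the given $B$ and with $L=m+1$. It then remains to convert the moment bounds $\int |x-a|^{2\ell}q^2\,d\nu\le C r^{-\ell}$, $1\le\ell\le m+1$, into the three weighted estimates. The first estimate is simply the case $\ell=1$.

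For the other two estimates, we write $x=a+(x-a)$ and use the elementary inequality $1+x^2\le 1+2a^2+2(x-a)^2\le 2\bigl((1+a^2)+(x-a)^2\bigr)$. Raising it to the power $m+1$ and expanding binomially gives
\[
(1+x^2)^{m+1}\le 2^{m+1}\sum_{\ell=0}^{m+1}\binom{m+1}{\ell}(1+a^2)^{m+1-\ell}(x-a)^{2\ell}.
\]
We integrate against $q^2\,d\nu$ and bound each term by $C r^{-\ell}(1+a^2)^{m+1-\ell}$. Since $r^{-\ell}\le 1\le(1+a^2)^{\ell}$, every term is at most $C(1+a^2)^{m+1}$, which proves the second estimate.

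For the third estimate we argue the same way with exponent $m$ and multiply by $(x-a)^2$:
\[
(1+x^2)^m(x-a)^2\le 2^m\sum_{\ell=0}^{m}\binom{m}{\ell}(1+a^2)^{m-\ell}(x-a)^{2\ell+2}.
\]
Integrating, the $\ell$-th term is bounded by $C(1+a^2)^{m-\ell}r^{-(\ell+1)}$. We need this to be at most $C r^{-1}(1+a^2)^m$, that is, $r^{-\ell}\le(1+a^2)^{\ell}$, which is trivially true. This is why we need moments up to order $2(m+1)$, and hence the choice $L=m+1$.

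The only real work is therefore already contained in Lemma~\ref{lem:higher-order-localization}. The one point to check is that the constant $C$ there is uniform in $a$ for $|a|\le Br^{1/4}$, which it is. No step here is a genuine obstacle; the thing to watch is choosing $L$ large enough to cover the top moment $(x-a)^{2m+2}$ appearing in the third estimate.
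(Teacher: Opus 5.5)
Your proposal is correct and follows essentially the same route as the paper: take $q$ from Lemma~\ref{lem:higher-order-localization} with $L=m+1$, then use $1+x^2\le 2(1+a^2)+2(x-a)^2$ to reduce the weighted integrals to the moment bounds. The only cosmetic difference is that you expand binomially and use every intermediate moment, whereas the paper uses the cruder bound $(u+v)^k\le C_m(u^k+v^k)$ and so needs only the normalization and the $\ell=1$ and $\ell=m+1$ moments.
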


\begin{proof}
Apply Lemma~\ref{lem:higher-order-localization} with $L=m+1$. Then
\[
        \int_{\R }|x-a|^{2j}q(x)^2\,d\nu(x)
        \leq
        \frac{C}{r^j},
        \qquad
        1\leq j\leq m+1.
\]
Since $1+x^2\leq 2(1+a^2)+2(x-a)^2,$ we obtain
\[
        (1+x^2)^{m+1}
        \leq
        C_m
        \left(
        (1+a^2)^{m+1}
        +
        |x-a|^{2m+2}
        \right).
\]
Therefore,
\[
        \int_{\R }(1+x^2)^{m+1}q(x)^2\,d\nu(x)
        \leq
        C(1+a^2)^{m+1}.
\]
Similarly,
\[
        (1+x^2)^m(x-a)^2
        \leq
        C_m
        \left(
        (1+a^2)^m(x-a)^2
        +
        |x-a|^{2m+2}
        \right).
\]
Hence
\[
\begin{aligned}
\int_{\R }
(1+x^2)^m(x-a)^2q(x)^2\,d\nu(x) \leq
\frac{C}{r}(1+a^2)^m
+\frac{C}{r^{m+1}}\leq
\frac{C}{r}(1+a^2)^m.
\end{aligned}
\]
The estimate for $\int (x-a)^2q^2\,d\nu$ is the case $j=1$.
\end{proof}


\begin{proof}[Proof of the rate in Example \ref{ex:fmxy}]
We first prove the lower bound of \eqref{eq:fmxy}. Set
\[
        A(y)=(1+y^2)^m,
        \qquad
        D(y)=1+y^2A(y).
\]
Completing the square in $x$, we obtain
\[
        f_m(x,y)
        =
        D(y)
        \left(
        x-\frac{yA(y)}{D(y)}
        \right)^2
        +
        \frac{A(y)}{D(y)}.
\]
Hence, by the lower estimate of Lemma \ref{lem:x^2}, for every
$h\in\R [x,y]_r$,
\[
\begin{aligned}
\int_{\R }
f_m(x,y)h(x,y)^2\,d\nu(x)
&\geq
\left(
\frac{D(y)}{4r+2}
+
\frac{A(y)}{D(y)}
\right)
\int_{\R }h(x,y)^2\,d\nu(x).
\end{aligned}
\]
Let $R=4r+2$ and $z=1+y^2$. Then $A(y)=z^m$ and
$D(y)=1+(z-1)z^m.$ If $z\leq R^{1/(m+2)}$, then
\[
        \frac{A(y)}{D(y)}
        \geq
        \frac{1}{2z}
        \geq
        \frac12R^{-1/(m+2)}.
\]
If $z\geq R^{1/(m+2)}$, then, for sufficiently large $R$,
\[
        \frac{D(y)}{R}
        \geq
        \frac{(z-1)z^m}{R}
        \geq
        \frac{z^{m+1}}{2R}
        \geq
        \frac12R^{-1/(m+2)}.
\]
Therefore,
\begin{equation}\label{eq:lambdalow}
        \lambda_r^{(m)}
        \geq
        c_mr^{-1/(m+2)}.
\end{equation}

For the upper bound, let
\[
        n=\left\lfloor\frac r2\right\rfloor,
        \qquad
        Y=n^{1/(2m+4)},
        \qquad
        a=Y^{-1}.
\]
Then 
$
a=n^{-1/(2m+4)}\leq 1$
and
$Y=n^{1/(2m+4)}\leq n^{1/4}$.
Lemma~\ref{lem:weighted-Gaussian-localization}, applied with $r=n$
first at the center $a$ and then at the center $Y$, yields polynomials
$q\in\R [x]_n$ and $v\in\R [y]_n$ such that
\begin{equation}\label{eq:l13applied}
\begin{split}
\int_{\R }q(x)^2\,d\nu(x)
=
\int_{\R }v(y)^2\,d\nu(y)
&=
1, \\
\int_{\R }(x-a)^2q(x)^2\,d\nu(x)
&\leq
\frac{C}{n},\\
\int_{\R }(1+y^2)^{m+1}v(y)^2\,d\nu(y)
&\leq
C_m(1+Y^2)^{m+1},\\
\int_{\R }(1+y^2)^m(y-Y)^2v(y)^2\,d\nu(y)
&\leq
\frac{C_m}{n}(1+Y^2)^m.
\end{split}
\end{equation}
Set $h(x,y)=q(x)v(y).$ Then $\deg h\leq 2n\leq r$ and
$\int_{\R ^2} h(x,y)^2 \, d\nu(x)\,d\nu(y) = 1$. Moreover,
from $x^2 \le 2a^2 + 2(x - a)^2,$ we deduce
\[
        \int_{\R ^2}x^2h^2\,d\nu(x)\,d\nu(y)
        \leq
        2a^2+\frac{C}{n}.
\]
Since $aY=1$, we have
$xy-1=y(x-a)+a(y-Y),$
and therefore
\[
 (xy-1)^2
        \leq
        2y^2(x-a)^2+2a^2(y-Y)^2.
\]
Using Fubini's theorem together with the  estimates \eqref{eq:l13applied} above, we obtain
\[
\begin{aligned}
\int_{\R ^2}
(1+y^2)^m(xy-1)^2h^2\,d\nu(x)\,d\nu(y)
\leq
\frac{C_m}{n}(1+Y^2)^{m+1}
+
\frac{C_ma^2}{n}(1+Y^2)^m.
\end{aligned}
\]
Consequently,
\[
\lambda_r^{(m)}
\leq
2a^2+\frac{C}{n}
+
\frac{C_m}{n}(1+Y^2)^{m+1}
+
\frac{C_ma^2}{n}(1+Y^2)^m.
\]
Since $Y=n^{1/(2m+4)}$ and $a=Y^{-1}$, each term on the right-hand side is bounded by
$C_mn^{-1/(m+2)}$. Thus
\begin{equation}\label{eq:lambdahigh}
        \lambda_r^{(m)}
        \leq
        C_mr^{-1/(m+2)}.
\end{equation}
Combining the \eqref{eq:lambdalow} and \eqref{eq:lambdahigh} gives
\[
        \lambda_r^{(m)}
        =
        \Theta\left(r^{-1/(m+2)}\right). \qedhere
\]
\end{proof}

\section{Conclusion and discussion}

We have established an explicit convergence rate for Lasserre's measure-based
upper-bound hierarchy on the noncompact domain $\R^d$ when the
reference measure is Gaussian. More precisely, if $f\in\R[x]$
attains its global infimum, then
\[
    \lambda_r-f_{\min}
    =
    O\left(\frac{1}{r}\right).
\]
To the best of our knowledge, this is the first quantitative convergence
rate for this hierarchy in a noncompact setting.

The example $f=x^2$ shows that the upper bound cannot be improved for even the simplest polynomial objective:

in that case,
\[
    \lambda_r=\Theta(1/r).
\]

The family of polynomials in Section~\ref{sec:examples} shows, on the other hand, that attainment of the
infimum is a necessary assumption. If the infimum is
not attained, the convergence may occur at rates\looseness=-1
\[
    \Theta\left(r^{-1/(m+2)}\right),
    \qquad m\ge0,
\]
and hence may be arbitrarily slow among polynomial rates. Consequently, some
form of control on the location of near-minimizers is indispensable for
uniform quantitative results of the type proved in
Theorem~\ref{thm:improved-log-rate}. It would be interesting to identify
conditions weaker than attainment that still guarantee a prescribed
convergence rate.

\subsection{On the use of LLMs}
We acknowledge the use of ChatGPT Astra during the derivation of our main bounds. While our initial argument, preserved in Appendix \ref{appdx}, was formulated without automated assistance, discussions with Astra prompted us to recognize that our own Lemma \ref{lem:higher-order-localization} 
could be leveraged more broadly to remove the $\log$-factor. The final proof structure presented in the main text was rigorously checked and written by the authors.

\appendix

\section{Gaussian localization approach}\label{appdx}

Here we give an alternative, perhaps more intuitive, argument that yields an upper bound of $O(\frac{\log r}{r})$ for the upper bound hierarchy with respect to the standard Gaussian measure. 
In a nutshell, we introduce a Gaussian localization centered at a global minimizer $x^*$ of $f$. We then approximate the Gaussian localization by a sum-of-squares density. Our proof uses some results from approximation theory and Gaussian tail bounds, which we state first. 

For a polynomial $
p(x)=\sum_{\mu} p_\mu x^\mu,$ we define the $\ell^1$-norm of its coefficients by
\[
\|p\|_{1}
:=
\sum_{\mu}|p_\mu|.
\]

We use the following polynomial approximation result for the exponential function to construct the ``polynomialized'' Gaussian.

\begin{theorem}[\protect{\cite[Theorem 4.1]{SVapprox}}]\label{thm:approxexp}
For every $b>0$ and $0 < \delta \leq 1$, there exists a polynomial $S_{b,\delta}$ that satisfies 
\[
\sup_{t \in [0,b]} |e^{-t} - S_{b,\delta}(t)| \leq \delta,
\]
and has degree $O(\sqrt{\max\{b,\log(1/\delta)\}\cdot \log(1/\delta)})$.
\end{theorem}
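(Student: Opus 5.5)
The plan is the Chebyshev/random-walk argument of Sachdeva and Vishnoi, in three steps: map $[0,b]$ to $[-1,1]$, truncate the Taylor series of the exponential, and replace each surviving monomial $x^k$ by a low-degree Chebyshev approximation.

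\emph{Step 1: rescale and truncate.} Substitute $x=1-2t/b$, so $t\in[0,b]$ corresponds to $x\in[-1,1]$ and
\[
e^{-t}=e^{-b/2}e^{(b/2)x}=\sum_{k\ge0}e^{-b/2}\frac{(b/2)^k}{k!}\,x^k=:\sum_{k\ge 0}w_k x^k .
\]
The weights satisfy $w_k\ge0$ and $\sum_k w_k=1$; they are the Poisson$(b/2)$ probabilities. Since $|x^k|\le1$ on $[-1,1]$, truncating at $k\le k_0$ costs at most the Poisson tail $\Pr[\mathrm{Poi}(b/2)>k_0]$. By a standard Poisson/Chernoff bound, this tail is at most $\delta/2$ once $k_0=C\max\{b,\log(1/\delta)\}$ for a suitable absolute constant $C$. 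For example, use $\Pr[\mathrm{Poi}(\lambda)>k]\le (e\lambda/k)^k$ and take $k_0\ge e^2\lambda$ and $k_0\ge\log(2/\delta)$.

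\emph{Step 2: Chebyshev approximation of monomials.} For $x=\cos\theta$ we have $\cos^s\theta=\mathbb E[\cos(D_s\theta)]$, where $D_s$ is a sum of $s$ independent uniform $\pm1$ signs. Hence
\[
x^s=\sum_j \Pr[D_s=j]\,T_{|j|}(x).
\]
Set $p_{s}:=\sum_{|j|\le d}\Pr[D_s=j]\,T_{|j|}$. Because $|T_j|\le1$ on $[-1,1]$, Hoeffding's inequality gives
\[
\sup_{[-1,1]}|x^s-p_s(x)|\le\Pr[|D_s|>d]\le 2e^{-d^2/(2s)}.
\]
Choosing $d=\lceil\sqrt{2k_0\log(4/\delta)}\,\rceil$ makes this at most $\delta/2$ simultaneously for all $s\le k_0$. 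For $s\le d$ simply take $p_s=x^s$ exactly.

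\emph{Step 3: assemble.} Define $S_{b,\delta}(t):=\sum_{k\le k_0}w_k\,p_k(1-2t/b)$. This is a polynomial in $t$ of degree at most $d=O\bigl(\sqrt{\max\{b,\log(1/\delta)\}\log(1/\delta)}\bigr)$. Its error on $[0,b]$ is at most the truncation error plus the monomial errors:
\[
\delta/2+\sum_{k\le k_0}w_k(\delta/2)\le\delta,
\]
using $\sum_k w_k\le1$.

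The only delicate point is the bookkeeping of the two tail estimates, the Poisson tail and the random-walk tail, so that both hold with the stated degree. This works because the weights $w_k$ sum to $1$, so errors never accumulate beyond $\delta/2$ in Step 2. Everything else is routine.
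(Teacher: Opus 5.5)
Your proposal is correct and is essentially the standard Sachdeva--Vishnoi proof of the cited Theorem~4.1, which the paper quotes without proof: Taylor truncation with Poisson weights, then the Chebyshev/random-walk approximation of each monomial, with the two tails each bounded by $\delta/2$. The one nit is the regime $\delta\ge 1/2$, where $\log(2/\delta)$, $\log(4/\delta)$ and the ceilings are not $O(\log(1/\delta))$. There the constant polynomial $1/2$ already has error $\le 1/2\le\delta$, so you may assume $\delta<1/2$, and then your bookkeeping gives the stated degree.
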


We need the following elementary growth estimate, see, e.g.~\cite[Section 2.7, (2.37)]{RivlinCheb}.

\begin{lemma}\label{lem:cheb-tail}
Let $p\in\R [u]$ have degree at most $s$, and suppose that
\[
\|p\|_{L^\infty[0,1]}\le B .
\]
Then, for every $u\ge 1$,
\[
|p(u)|\le B\,T_s(2u-1)\le B(4u)^s,
\]
where $T_s$ is the Chebyshev polynomial of the first kind; see \cite{MasonHandscomb}.
\end{lemma}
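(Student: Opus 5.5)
The plan is to reduce to the classical extremal property of Chebyshev polynomials outside $[-1,1]$ and then bound $T_s$ crudely. Substitute $t=2u-1$, which maps $[0,1]$ onto $[-1,1]$ and $u\ge1$ onto $t\ge1$. Set $q(t)=p\bigl((t+1)/2\bigr)$; then $q$ has degree at most $s$ and $\|q\|_{L^\infty[-1,1]}\le B$. It therefore suffices to prove
\[
|q(t)|\le B\,T_s(t)\qquad (t\ge 1),
\]
together with $T_s(t)\le (2t)^s$ for $t\ge1$. The case $s=0$ is trivial, so we assume $s\ge1$.

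For the first inequality we use Lagrange interpolation at the $s+1$ Chebyshev extremal points $\eta_k=\cos(k\pi/s)$, $k=0,\dots,s$, where $T_s(\eta_k)=(-1)^k$. The node polynomial is
\[
\omega(t)=\prod_{k=0}^{s}(t-\eta_k)=c\,(t^2-1)U_{s-1}(t)
\]
with $c>0$. Since $\deg q\le s$, we have $q(t)=\sum_k q(\eta_k)\ell_k(t)$ with $\ell_k(t)=\omega(t)/\bigl((t-\eta_k)\omega'(\eta_k)\bigr)$, and likewise $T_s(t)=\sum_k(-1)^k\ell_k(t)$.

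The key step, and the only real obstacle, is a sign check: for $t>1$ we need $\operatorname{sign}\ell_k(t)=(-1)^k$. Here $\omega(t)>0$ and $t-\eta_k>0$. The nodes are simple and ordered decreasingly ($\eta_0=1>\eta_1>\dots>\eta_s=-1$), and $\omega$ is positive to the right of $\eta_0$. Hence $\omega'(\eta_k)$ alternates in sign, starting positive at $\eta_0$, so $\operatorname{sign}\omega'(\eta_k)=(-1)^k$. Consequently $\sum_k|\ell_k(t)|=\sum_k(-1)^k\ell_k(t)=T_s(t)$. Combining this with $|q(\eta_k)|\le B$ gives
\[
|q(t)|\le B\sum_k|\ell_k(t)|=B\,T_s(t)\qquad (t>1).
\]
The case $t=1$ follows by continuity, or directly from $q(1)=p(1)$ and $T_s(1)=1$. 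Substituting back $t=2u-1$ gives $|p(u)|\le B\,T_s(2u-1)$.

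Finally, for $t\ge1$ we have $T_s(t)=\cosh(s\,\mathrm{arccosh}\,t)\le\bigl(t+\sqrt{t^2-1}\bigr)^s\le(2t)^s$. With $t=2u-1$ this yields $(2t)^s=(4u-2)^s\le(4u)^s$, which gives the second inequality.
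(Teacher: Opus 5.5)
Your proof is correct: the affine substitution, exact interpolation at the $s+1$ extrema $\eta_k=\cos(k\pi/s)$, the sign check $\operatorname{sign}\omega'(\eta_k)=(-1)^k$ (which gives $\sum_k|\ell_k(t)|=T_s(t)$ for $t>1$), and the estimate $T_s(t)\le(t+\sqrt{t^2-1})^s\le(2t)^s$ all go through. The paper gives no proof of its own and only cites Rivlin for this classical Chebyshev growth inequality; your interpolation argument is the standard proof of that fact, and it also makes explicit the bound $T_s(2u-1)\le(4u)^s$, which the paper leaves implicit.
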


We now state the two tail estimates used in this argument. The first controls polynomial moments of a rescaled Gaussian, while the second controls the polynomial approximation outside its localization region.
{This estimate follows by switching to polar coordinates and using the asymptotics of the incomplete gamma function~\cite[Eq.~(6.5.32)]{AS64handbook}.}

\begin{lemma}\label{lemma_GaussianTail}
Let $\mathtt{a}\in\Z_+^d$. There exists a constant
$C_{d,|\mathtt{a}|}>0$ such that, whenever $\beta R^2\ge 1$,
\begin{align}\label{equation_3_0}
\int_{\|x\|>R} |x^\mathtt{a}|
e^{-\frac{\beta}{2}\|x\|^2}\,dx
\le
C_{d,|\mathtt{a}|}\,\beta^{-1}
R^{d+|\mathtt{a}|-2}
e^{-\frac{\beta R^2}{2}}.
\end{align}
Consequently, if $|\mathtt{a}|\le N$ and $R\ge 1$, then
\[
\int_{\|x\|>R} |x^\mathtt{a}|
e^{-\frac{\beta}{2}\|x\|^2}\,dx
\le
C_{d,N}\,\beta^{-1}
R^{d+N-2}
e^{-\frac{\beta R^2}{2}}.
\]
\end{lemma}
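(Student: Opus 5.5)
The plan is to reduce to a one-dimensional radial integral via polar coordinates and then bound the resulting upper incomplete gamma function. First, since $|x^{\mathtt a}|=\prod_i|x_i|^{a_i}\le \|x\|^{|\mathtt a|}$, it suffices to bound
\[
\int_{\|x\|>R}\|x\|^{k}e^{-\frac{\beta}{2}\|x\|^2}\,dx
=\omega_{d-1}\int_R^\infty \rho^{k+d-1}e^{-\frac{\beta}{2}\rho^2}\,d\rho ,
\qquad k=|\mathtt a|,
\]
where $\omega_{d-1}$ is the surface area of the unit sphere $S^{d-1}$.

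Substituting $t=\beta\rho^2/2$ turns the radial integral into
\[
\tfrac12\,(2/\beta)^{(k+d)/2}\,\Gamma\!\left(\tfrac{k+d}{2},\tfrac{\beta R^2}{2}\right).
\]
Write $s=(k+d)/2$ and $X=\beta R^2/2\ge 1/2$. The key estimate is
\[
\Gamma(s,X)\le C_s X^{s-1}e^{-X}\qquad\text{for } X\ge 1/2 .
\]
For large $X$ this is the asymptotic \cite[Eq.~(6.5.32)]{AS64handbook}. To get it uniformly on $X\ge 1/2$, I would argue directly.

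If $s\le 1$, then $t^{s-1}\le X^{s-1}$ on $[X,\infty)$, so $\Gamma(s,X)\le X^{s-1}e^{-X}$.

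If $s>1$, substitute $t=X+u$ and use $(X+u)^{s-1}\le X^{s-1}(1+u/X)^{s-1}\le X^{s-1}(1+2u)^{s-1}$, valid since $X\ge 1/2$. Integrating against $e^{-u}$ then gives the constant $C_s=\int_0^\infty(1+2u)^{s-1}e^{-u}\,du$.

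Putting this back, the bound becomes
\[
\tfrac12\,C_s\,(2/\beta)^{s}(\beta R^2/2)^{s-1}e^{-\beta R^2/2}
= C\,\beta^{-1}R^{2s-2}e^{-\beta R^2/2},
\]
and $2s-2=d+k-2$, which is \eqref{equation_3_0}.

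For the consequence, take $R\ge1$ and $k\le N$. Then $R^{d+k-2}\le R^{d+N-2}$ whenever $d+k-2\ge 0$. In the remaining edge case $d=1$, $k=0$, we have $R^{-1}\le 1\le R^{N-1}$ provided $N\ge1$; if $N=0$ and $d=1$, the statement is already the first one. Finally, let $C_{d,N}=\max_{k\le N}C_{d,k}$.

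The main obstacle is making the incomplete gamma bound uniform down to $\beta R^2\ge1$ rather than only asymptotically. The elementary substitution above handles this, at the price of an explicit constant.
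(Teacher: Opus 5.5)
Your proof is correct and follows the same route as the paper: polar coordinates, reduction to $\Gamma\bigl(\tfrac{|\mathtt a|+d}{2},\tfrac{\beta R^2}{2}\bigr)$, and a bound $\Gamma(s,X)\le C_sX^{s-1}e^{-X}$. The only difference is that the paper simply cites the large-$X$ asymptotics \cite[Eq.~(6.5.32)]{AS64handbook}, whereas you prove the bound directly and uniformly for $X\ge 1/2$, which is exactly what the hypothesis $\beta R^2\ge1$ needs. (Your case split in the consequence is harmless but unnecessary: for $R\ge1$ the map $p\mapsto R^p$ is nondecreasing for all real $p$, so $R^{d+k-2}\le R^{d+N-2}$ whenever $k\le N$.)
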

Using the Chebyshev growth estimate we obtain exponential tail control for the resulting polynomialized Gaussian, uniformly over bounded shifts.

\begin{lemma}\label{lem:new8shifted}
Let $g\in\R [x]$ have degree at most $N$, and let $B>0$.
Suppose
\[
q(x)=p\!\left(\frac{\|x-a\|^2}{R^2}\right),
\qquad
\deg p=s,
\qquad
\|p\|_{L^\infty[0,1]}\le 2,
\]
where $\|a\|\le B$.
Then there are constants $K\geq1$ and $c,C>0$, depending only on
$d,N,B$, such that whenever
$
R^2\ge K(s+1),
$
\[
\int_{\|x-a\|>R}|g(x)|q(x)^2\,d\mu(x)
\le
C\|g\|_1 e^{-cR^2},
\]
where $\mu$ denotes the standard Gaussian probability measure.
\end{lemma}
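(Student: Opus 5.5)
The idea is to split the integral over $\{\|x-a\|>R\}$ into two pieces. On this region $u=\|x-a\|^2/R^2\ge 1$, so Lemma~\ref{lem:cheb-tail} applied to $p$ (with $\|p\|_{L^\infty[0,1]}\le 2$) gives
\[
q(x)^2\le 4\,(4u)^{2s}=4\left(\frac{4\|x-a\|^2}{R^2}\right)^{2s}.
\]
We also use $|g(x)|\le \|g\|_1\max_{|\mathtt a|\le N}|x^{\mathtt a}|\le \|g\|_1(1+\|x\|)^N$. The polynomial factor $(4\|x-a\|^2/R^2)^{2s}$ has degree $4s$, which is comparable to $R^2/K$ by assumption. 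The plan is to absorb this growth into part of the Gaussian weight and to use the remaining Gaussian factor, via Lemma~\ref{lemma_GaussianTail}, to produce the $e^{-cR^2}$ decay.

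Concretely, write $e^{-\|x\|^2/2}=e^{-\|x\|^2/4}\cdot e^{-\|x\|^2/4}$. For $\|a\|\le B$ and $R\ge 2B$ (which holds once $K$ is large and $s\ge 0$), $\|x-a\|>R$ implies $\|x\|\ge \|x-a\|/2$. Set $t=\|x-a\|^2/R^2\ge1$. The first factor then satisfies
\[
\Bigl(\tfrac{4\|x-a\|^2}{R^2}\Bigr)^{2s}e^{-\|x\|^2/8}\le \exp\bigl(2s\log(4t)-R^2t/32\bigr).
\]
If $R^2\ge K(s+1)$ with $K$ large (say $K\ge 2^{10}$), then $2s\log(4t)\le 2s(\log 4+t)\le R^2t/64$. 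Hence this factor is at most $e^{-R^2t/64}\le e^{-R^2/64}$. Here only half of the first $e^{-\|x\|^2/4}$ has been spent; we discard the remaining $e^{-\|x\|^2/8}\le 1$. The remaining integral is
\[
\int_{\|x\|>R/2}(1+\|x\|)^N e^{-\|x\|^2/4}\,dx.
\]
Expanding $(1+\|x\|)^N$ into monomial-type terms, or simply bounding it by $C_N(1+\|x\|^N)$ and using the radial version of Lemma~\ref{lemma_GaussianTail} with $\beta=1/2$ and radius $R/2$, this is $\le C_{d,N}R^{d+N-2}e^{-R^2/16}$, provided $\beta(R/2)^2\ge1$, which again follows from $K$ large.

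Multiplying the pieces gives the bound $C\|g\|_1R^{d+N-2}e^{-R^2/16-R^2/64}$. The polynomial prefactor is absorbed by shrinking the exponent, giving $C\|g\|_1 e^{-cR^2}$ with, for example, $c=1/32$, and constants depending only on $d,N,B$. The main obstacle is the bookkeeping in the middle step. One must make sure that $\|x-a\|$ versus $\|x\|$ and the shift by $a$ cost only constants depending on $B$. One must also make sure the Chebyshev growth $(4t)^{2s}$ is beaten uniformly in $t\ge1$. This is exactly where the hypothesis $R^2\ge K(s+1)$ enters, via $\log(4t)\le \log4+t$. If the crude splitting loses too much, the fallback is to integrate in the variable $y=x-a$, bound $e^{-\|y+a\|^2/2}\le e^{B^2}e^{-\|y\|^2/4}$, and run the same argument centered at the origin.
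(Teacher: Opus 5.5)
Your proof is correct and follows the same route as the paper: you apply Lemma~\ref{lem:cheb-tail} on $\{u\ge1\}$, absorb the growth $(4u)^{2s}$ into part of the Gaussian weight using $R^2\ge K(s+1)$ and a linear-in-$u$ bound on $\log(4u)$, and extract $e^{-cR^2}$ from what remains. The differences are only bookkeeping: you handle the shift via $\|x\|\ge\|x-a\|/2$ on the tail region instead of substituting $y=x-a$ and paying $e^{B^2/2}$, so only $K$, not $C$, depends on $B$; and your appeal to Lemma~\ref{lemma_GaussianTail} is superfluous, since the pointwise factor $e^{-R^2/64}$ times the finite integral $\int_{\R^d}(1+\|x\|)^N e^{-\|x\|^2/4}\,dx$ already gives the claim with $c=1/64$.
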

\begin{proof}
Put $y=x-a$ and $r=\|y\|$. By squaring the triangle inequality $\|y\|\leq \|y+a\|+\|a\|$,
we have
$\|y+a\|^2
\ge \frac12\|y\|^2-\|a\|^2$,
whence 
\[
e^{-\|y+a\|^2/2}
\le
e^{B^2/2}e^{-\|y\|^2/4}
\]
since $\|a\|\le B$.
We also have an upper bound on $g$ in terms of the coefficient norm:
\[
|g(y+a)|\le A_{d,N,B}\|g\|_1(1+r)^N,
\]
for a suitable constant $A_{d,N,B}$.

For $r>R$, Lemma~\ref{lem:cheb-tail} and the inequality
$\log(4u)\le u\log4$ for $u\ge1$ give
\[
q(y+a)^2
\le 4(4r^2/R^2)^{2s}
\le 4\exp\!\left(\frac{2s\log4}{R^2}r^2\right)
\le 4e^{r^2/8},
\]
provided $R^2\ge16(\log4)(s+1)$. Consequently,
\begin{align*}
\int_{\|x-a\|>R}|g(x)|q(x)^2\,d\mu(x)
&\le C_{d,N,B}\|g\|_1
   \int_{\|y\|>R}(1+\|y\|)^N e^{-\|y\|^2/8}\,dy\\
&\le C_{d,N,B}\|g\|_1 e^{-R^2/16}
   \int_{\mathbb R^d}(1+\|y\|)^N e^{-\|y\|^2/16}\,dy.
\end{align*}
The last integral is finite and depends only on $d,N$.
Absorbing it into the constant proves the claim with
$K=16\log4$ and $c=1/16$.
\end{proof}

\subsection{Approximating localized Gaussians with polynomial densities}

Let $x^* \in \R ^d$ be a global minimizer of $f$, and write $f_{\min} = f(x^*)$. By definition of the $r$-th Lasserre upper bound $\lambda_r$ in \eqref{eq:lassUpper}, for every nonzero test polynomial $q \in \R [x]$ with $\deg(q) \le r$,
\begin{equation}\label{eq:rayleigh-quotient}
0 \le \lambda_r - f_{\min} \le \frac{\int_{\R ^d} \bigl(f(x) - f_{\min}\bigr) q(x)^2 \, d \mu(x)}{\int_{\R ^d} q(x)^2 \, d \mu(x)}.
\end{equation}
The argument proceeds by constructing a sequence of sum-of-squares weights $q(x)^2$ that approximate a narrow Gaussian kernel centered near $x^*$, and bounding the numerator and denominator in \eqref{eq:rayleigh-quotient}. The narrow Gaussian kernel that we use is as follows. 
Fix $\alpha>0$, and define
\begin{equation*}
\gamma=1+\alpha^2,
\qquad
\beta=1+\frac1{\alpha^2}.
\end{equation*}
We then 
consider the Gaussian localization integral
\begin{align}
I_\alpha(f) &= \int_{\R^d} f(x) \exp\left(-\frac{\|x-\gamma x^\ast\|^2}{2\alpha^2}-\frac{\|x\|^2}{2}\right)\,dx. \notag \\
&=
e^{-\frac{\gamma}{2}\|x^\ast\|^2}
\int\limits_{\R^d}
f(x)e^{-\frac{\beta}{2}\|x-x^\ast\|^2}\,dx. \notag
\\
&=
e^{-\frac{\gamma}{2}\|x^\ast\|^2}
\int_{\R^d}
f(x+x^\ast)e^{-\frac{\beta}{2}\|x\|^2}\,dx. \label{equation_our_f}
\end{align}
Here, the first equality uses the following completion-of-squares identity
\begin{equation} \label{eq:completion-of-squares}
\frac{\|x-\gamma x^\ast\|^2}{2\alpha^2}
+
\frac{\|x\|^2}{2}
=
\frac{\beta}{2}\|x-x^\ast\|^2
+
\frac{\gamma}{2}\|x^\ast\|^2,
\end{equation}
and the second equality uses the change of variables $ x\mapsto x+x^\ast$. One can show that, as $\alpha \to 0$, we have $I_\alpha(f)/I_\alpha(1) \to f(x^*)$.

\medskip
\noindent\textit{Step 1:} Construction and feasibility of the test polynomial.\\
Fix constants $A > \frac{d}{2} + 1$ and $K \ge 1$ (to be chosen sufficiently large). For $r \in \N$, set
\[
\delta = r^{-A}, \quad L = \log(1/\delta) = A \log r, \quad R = \sqrt{Kr},
\]
and define the parameters
\[
\alpha = C_{\deg} \frac{R\sqrt{L}}{r} = C_{\deg} \sqrt{AK \frac{\log r}{r}}, \quad \gamma = 1 + \alpha^2, \quad \beta = 1 + \frac{1}{\alpha^2},
\]
where $C_{\deg} > 0$ is a constant to be fixed shortly. Note that $\alpha \to 0$ and $\beta =1+ \alpha^{-2} = \Theta(r / \log r)$ as $r \to \infty$.
Consider the target Gaussian localization profile 
\[
G_\alpha(x) = \exp\left( - \frac{\beta - 1}{4} \|x - \gamma x^*\|^2 \right).
\]
On the Euclidean ball $B(\gamma x^*, R)$, the exponent satisfies $\frac{\beta - 1}{4} \|x - \gamma x^*\|^2 \in [0, b]$, where
\[
b = \frac{\beta - 1}{4} R^2 = \frac{R^2}{4\alpha^2} = \frac{r^2}{4 C_{\deg}^2 L}.
\]
For all sufficiently large $r$, we have $b \ge L$. Applying Theorem~\ref{thm:approxexp} with interval $[0, b]$ and precision $\delta$, there exists a univariate polynomial $S_{b,\delta}$ such that
\[
\sup_{t \in [0,b]} |e^{-t} - S_{b,\delta}(t)| \le \delta,
\]
with degree bounded by
$s = \deg(S_{b,\delta}) \le C \sqrt{b L} = \frac{C}{2 C_{\deg}} r$.
We define our test polynomial by
\[
q(x) = S_{b,\delta}\left( \frac{\beta - 1}{4} \|x - \gamma x^*\|^2 \right).
\]
Choosing $C_{\deg} \ge C$ ensures that $\deg(q) = 2s \le r$, so $q \in \R [x]_r$ is feasible for the $r$-th relaxation. Moreover, $|q(\gamma x^*) - 1| = |S_{b,\delta}(0) - 1| \le \delta < 1$ for large $r$, so $q \neq 0$.

\medskip
\noindent\textit{Step 2:} Comparison between $q^2 d \mu$ and the localized Gaussian $G_\alpha^2 d \mu$.\\
On the ball $B(\gamma x^*, R)$, we have $|G_\alpha(x) - q(x)| \le \delta$. Since $0 < G_\alpha(x) \le 1$, it follows that $|q(x)| \le 1 + \delta \le 2$, and hence
\[
|q(x)^2 - G_\alpha(x)^2| \le |q(x) - G_\alpha(x)| \cdot |q(x) + G_\alpha(x)| \le 3\delta.
\]
Therefore, for any polynomial $g \in \R [x]$ of degree at most $N$,
\begin{equation} \label{eq:3split}
\begin{split}
\left| \int_{\R ^d} g q^2 \, d \mu - \int_{\R ^d} g G_\alpha^2 \, d \mu \right|
&\le 3\delta \int_{\|x - \gamma x^*\| \le R} |g(x)| \, d \mu(x) \\
&\quad + \int_{\|x - \gamma x^*\| > R} |g(x)| q(x)^2 \, d \mu(x) \\
&\quad + \int_{\|x - \gamma x^*\| > R} |g(x)| G_\alpha(x)^2 \, d \mu(x).
\end{split}
\end{equation}
We bound the three terms on the right-hand side of \eqref{eq:3split} as follows:
\begin{enumerate}[\rm(i)]
\item \emph{Ball approximation term:} Since polynomials are integrable against the Gaussian measure,
\[
3\delta \int_{\|x - \gamma x^*\| \le R} |g(x)| \, d \mu(x) \le 3\delta \|g\|_{L^1(\mu)} \le C_{d,N} \|g\|_1 \delta.
\]
\item \emph{Polynomialized tail:} Set $p(t) = S_{b,\delta}(bt)$ for $t \in [0,1]$. Then $\|p\|_{L^\infty([0,1])} \le 1 + \delta \le 2$ and $q(x) = p\bigl( \|x - \gamma x^*\|^2 / R^2 \bigr)$. Since $\gamma = 1 + \alpha^2 \to 1$, the shift vector satisfies $\|\gamma x^*\| \le 2 \|x^*\|$. By choosing $K \ge 1$ sufficiently large so that $R^2 = Kr \ge K'(s+1)$, Lemma~\ref{lem:new8shifted} implies
\[
\int_{\|x - \gamma x^*\| > R} |g(x)| q(x)^2 \, d \mu(x) \le C_{d,N,x^*} \|g\|_1 e^{-c R^2}.
\]
\item \emph{Gaussian tail:} By Equation~\eqref{eq:completion-of-squares}, the Gaussian weight $G_\alpha$ satisfies
\[
G_\alpha(x)^2 \, d \mu(x) = (2\pi)^{-d/2} e^{-\frac{\gamma}{2}\|x^*\|^2} e^{-\frac{\beta}{2}\|x - x^*\|^2} \, \mathrm{d}x.
\]
Under the substitution $u = x - x^*$, the condition $\|x - \gamma x^*\| > R$ translates to $\|u - \alpha^2 x^*\| > R$. Since $\alpha^2 \|x^*\| \to 0$, for all sufficiently large $r$ this implies $\|u\| > R/2$. Applying Lemma~\ref{lemma_GaussianTail} 
to the polynomial $u\mapsto g(u+x^*)$,
with radius $R/2$ and scaling parameter $\beta$ yields
\[
\int_{\|x - \gamma x^*\| > R} |g(x)| G_\alpha(x)^2 \, d \mu(x) \le C_{d,N,x^*} \|g\|_1 \beta^{-1} R^{d+N-2} e^{-\frac{1}{8}\beta R^2}.
\]
\end{enumerate}
Combining (i)--(iii), there exists a constant $C_g > 0$, depending on $d, N, x^*$, and $\|g\|_1$, such that
\begin{equation}\label{eq:i-iii}
\left| \int_{\R ^d} g q^2 \, d \mu - \int_{\R ^d} g G_\alpha^2 \, d \mu \right| \le C_g \left( \delta + e^{-c R^2} + e^{-c' \beta R^2} \right) = \mathcal{E}_g(r).
\end{equation}
Here, we have absorbed polynomial factors into the exponential. Indeed, since $\beta R^2\to\infty$, after decreasing the exponential constant if necessary, for all sufficiently large $r$,
\[
\beta^{-1}R^{d+N-2}e^{-\beta R^2/8}
\le C e^{-c'\beta R^2}.
\]

\medskip
\noindent\textit{Step 3}: Normalization and bounding the denominator.\\
Applying the estimate \eqref{eq:i-iii} with $g = 1$ gives
\[
\left| \int_{\R ^d} q(x)^2 \, d \mu(x) - J_0 \right| \le \mathcal{E}_1(r),
\]
where
$J_0 = \int_{\R ^d} G_\alpha(x)^2 \, d \mu(x) = (2\pi)^{-d/2} e^{-\frac{\gamma}{2}\|x^*\|^2} \int_{\R ^d} e^{-\frac{\beta}{2}\|u\|^2} \, \mathrm{d}u = e^{-\frac{\gamma}{2}\|x^*\|^2} \beta^{-d/2}$.
We examine the relative error $\mathcal{E}_1(r) / J_0$. Since $\gamma \to 1$, the factor $e^{\frac{\gamma}{2}\|x^*\|^2}$ is bounded by a constant for large $r$. Hence, there exists a constant $C_0 > 0$ such that
\[
\frac{\mathcal{E}_1(r)}{J_0} \le C_0 \beta^{d/2} \left( \delta + e^{-c R^2} + e^{-c' \beta R^2} \right).
\]
Since $\beta = \Theta(r / \log r)$, we estimate the three terms:
\begin{align*}
\beta^{d/2} \delta &\le C \left(\frac{r}{\log r}\right)^{d/2} r^{-A} = o(1) \quad \text{since } A > \frac{d}{2}, \\
\beta^{d/2} e^{-c R^2} &\le C \left(\frac{r}{\log r}\right)^{d/2} e^{-c K r} = o(1), \\
\beta^{d/2} e^{-c' \beta R^2} &\le C \left(\frac{r}{\log r}\right)^{d/2} \exp\left( -c'' \frac{r^2}{\log r} \right) = o(1).
\end{align*}
Thus $\mathcal{E}_1(r) / J_0 = o(1)$. In particular, for all sufficiently large $r$, $\mathcal{E}_1(r) \le \frac{1}{2} J_0$, which establishes the denominator lower bound
\begin{equation}\label{eq:denom-lower-bound}
\int_{\R ^d} q(x)^2 \, d \mu(x) \ge \frac{1}{2} J_0 = \frac{1}{2} e^{-\frac{\gamma}{2}\|x^*\|^2} \beta^{-d/2}.
\end{equation}

\medskip
\noindent\textit{Step 4:} Bounding the numerator.\\
Now apply \eqref{eq:i-iii} to the nonnegative polynomial $g(x) = f(x) - f_{\min}$. Setting
\[
J_F := \int_{\R ^d} \bigl(f(x) - f_{\min}\bigr) G_\alpha(x)^2 \, d \mu(x),
\]
we have $\int_{\R ^d} (f - f_{\min}) q^2 \, d \mu \le J_F + \mathcal{E}_{f - f_{\min}}(r)$. 
Using Equation~\eqref{eq:completion-of-squares} and substituting $u = x - x^*$,
\[
J_F = (2\pi)^{-d/2} e^{-\frac{\gamma}{2}\|x^*\|^2} \int_{\R ^d} F(u) e^{-\frac{\beta}{2}\|u\|^2} \, \mathrm{d}u,
\]
where $F(u) := f(x^* + u) - f(x^*)$. Because $x^*$ is a global minimizer of $f$, we have
\[
F(0) = 0 \quad \text{and} \quad \nabla F(0) = \nabla f(x^*) = 0.
\]
Consequently, the Taylor expansion of $F$ at the origin contains neither a constant nor linear terms, i.e.,
\[
F(u) = \frac{1}{2} u^T \nabla^2 f(x^*) u + \sum_{3 \le |\nu| \le N} \frac{D^\nu f(x^*)}{\nu!} u^\nu.
\]
By symmetry of the Gaussian measure $e^{-\frac{\beta}{2}\|u\|^2} \, \mathrm{d}u$, all odd-degree monomials integrate to zero. For every even monomial of degree $|\nu| = 2k \ge 2$,
\[
\frac{\int_{\R ^d} u^\nu e^{-\frac{\beta}{2}\|u\|^2} \, \mathrm{d}u}{\int_{\R ^d} e^{-\frac{\beta}{2}\|u\|^2} \, \mathrm{d}u} = O\left(\beta^{-k}\right) \le O\left(\beta^{-1}\right).
\]
Summing over the finitely many terms in $F$, we obtain
\begin{equation}\label{eq:gauss-ratio}
\frac{J_F}{J_0} = \frac{\int_{\R ^d} F(u) e^{-\frac{\beta}{2}\|u\|^2} \, \mathrm{d}u}{\int_{\R ^d} e^{-\frac{\beta}{2}\|u\|^2} \, \mathrm{d}u} \le \frac{A_{f,d}}{\beta},
\end{equation}
for a positive constant $A_{f,d}$ depending only on $f$ and $d$.

\medskip
\noindent\textit{Step 5:} Combining the estimates.\\
Combining the Rayleigh quotient \eqref{eq:rayleigh-quotient} with the denominator bound \eqref{eq:denom-lower-bound} and numerator estimate gives
\begin{align}\label{eq:final-rate}
\lambda_r - f_{\min} 
\le \frac{J_F + \mathcal{E}_{f - f_{\min}}(r)}{\frac{1}{2} J_0} &= 2 \frac{J_F}{J_0} + 2 \frac{\mathcal{E}_{f - f_{\min}}(r)}{J_0}.
\end{align}
By \eqref{eq:gauss-ratio} and the definition of $\beta = 1 + \alpha^{-2}$, the leading term satisfies
\[
2 \frac{J_F}{J_0} \le \frac{2 A_{f,d}}{\beta} \le 2 A_{f,d} \alpha^2 = 2 A_{f,d} C_{\deg}^2 A K \frac{\log r}{r}.
\]
For the remaining error term, since $A > \frac{d}{2} + 1$, the same asymptotics as in Step~3 show that
\[
\frac{\mathcal{E}_{f - f_{\min}}(r)}{J_0} \le C \beta^{d/2} \left( r^{-A} + e^{-c K r} + e^{-c'' \frac{r^2}{\log r}} \right) = O\left( \left(\frac{r}{\log r}\right)^{d/2} r^{-A} \right) = o\left( \frac{\log r}{r} \right).
\]
Substituting these bounds into \eqref{eq:final-rate} yields
\[
\lambda_r - f_{\min} \le C_{f,d,N} \frac{\log r}{r}
\]
for all sufficiently large $r$.

\end{document}